\newtheorem{theorem}{Theorem}[section]
\newtheorem{lemma}{Lemma}[section]
\newtheorem{proposition}{Proposition}[section]
\newtheorem{remark}{Remark}[section]
\newcommand{\bal}{\begin{align}}
\newcommand{\bbal}{\begin{align*}}
\newcommand{\beq}{\begin{equation}}
\newcommand{\eeq}{\end{equation}}
\newcommand{\bca}{\begin{cases}}
\newcommand{\eca}{\end{cases}}
\def\div{\mathord{{\rm div}}}
\newcommand{\pa}{\partial}
\newcommand{\fr}{\frac}
\newcommand{\dd}{\mathrm{d}}
\newcommand{\R}{\mathbb{R}}
\newcommand{\les}{\lesssim}
\newcommand{\F}{\dot{F}}
\begin{document}
\bibliographystyle{plain}
\title{On the well-posedness and non-uniform continuous dependence for the Novikov equation  in the
Triebel-Lizorkin spaces}

\author{Jinlu Li$^{1}$, Yanghai Yu$^{2,}$\footnote{E-mail: lijinlu@gnnu.edu.cn; yuyanghai214@sina.com(Corresponding author); mathzwp2010@163.com} and Weipeng Zhu$^{3}$\\
\small $^1$ School of Mathematics and Computer Sciences, Gannan Normal University, Ganzhou 341000, China\\
\small $^2$ School of Mathematics and Statistics, Anhui Normal University, Wuhu 241002, China\\
\small $^3$ School of Mathematics and Big Data, Foshan University, Foshan, Guangdong 528000, China}

\date{\today}

\maketitle\noindent{\hrulefill}

{\bf Abstract:} In this paper we study the Cauchy problem of the Novikov equation in $\R$ for initial data belonging to the Triebel-Lizorkin spaces, i.e, $u_0\in F^{s}_{p,r}$ with $1< p, r<\infty$ and $s>\max\{\frac32,1+\frac1p\}$. We prove local-in-time unique existence of solution to the Novikov equation in $F^{s}_{p,r}$. Furthermore, we obtain that the data-to-solution
of this equation is continuous but not uniformly continuous in the same space.

{\bf Keywords:} Novikov equation; Well-posedness; Triebel-Lizorkin space

{\bf MSC (2010):} 35Q53; 37K10
\vskip0mm\noindent{\hrulefill}

\thispagestyle{empty}
\section{Introduction}
\quad
In this paper, we consider the Cauchy problem of the Novikov equation
\begin{align}\label{N}
\begin{cases}
(1-\partial_x^2) \partial_t u+4 u^2 \partial_x u-3 u \partial_x u \partial_x^2 u-u^2 \partial_x^3 u=0, \\
u(x, 0)=u_0(x), \quad x \in \mathbb{R},
\end{cases}
\end{align}
and study its well-posedness and dependence on initial data in the Triebel-Lizorkin spaces. This equation has been discovered by Novikov \cite{VN} as a new integrable equation with cubic nonlinearities which can be thought as a generalization of the Camassa-Holm $(\mathrm{CH})$ equation. In \cite{VN}, Novikov investigated the question of integrability for Camassa--Holm type equations of the form
$$
(1-\partial_x^2) u_t=P\left(u, u_x, u_{x x}, u_{x x x}, \cdots\right),
$$
where $P$ is a polynomial of $u$ and its $x$-derivatives. Using as test for integrability the existence of an infinite hierarchy of
quasi-local higher symmetries, he produced about 20 integrable equations with quadratic nonlinearities that include the $\mathrm{CH}$ equation
\bal\label{ch}
(1-\partial_x^2) u_t=u u_{x x x}+2 u_x u_{x x}-3 u u_x
\end{align}
and the Degasperis-Procesi $(\mathrm{DP})$ equation
\bal\label{dp}
(1-\partial_x^2)  u_t=u u_{x x x}+3 u_x u_{x x}-4 u u_x .
\end{align}
Moreover, he produced about 10 integrable equations with cubic nonlinearities that include the  Novikov equation (NE)
\bal\label{ne}
(1-\partial_x^2)  u_t=u^2 u_{x x x}+3 u u_x u_{x x}-4 u^2 u_x.
\end{align}

The Camassa--Holm equation was originally derived as a bi-Hamiltonian system by Fokas and Fuchssteiner \cite{Fokas1981} in the context of the KdV model and gained prominence after Camassa--Holm \cite{Camassa1993} independently re-derived
it from the Euler equations of hydrodynamics using asymptotic expansions. \eqref{ch} is completely integrable \cite{Camassa1993,Constantin-P} with a bi-Hamiltonian structure \cite{Constantin-E,Fokas1981} and infinitely many conservation laws \cite{Camassa1993,Fokas1981}. Also, it admits exact peaked soliton solutions (peakons) of the form $ce^{-|x-ct|}$ with $c>0$, which are orbitally stable \cite{Constantin.Strauss} and models wave breaking (i.e., the solution remains bounded, while its slope becomes unbounded in finite time \cite{Constantin,Escher2,Escher3}).
The Degasperis-Procesi equation with a bi-Hamiltonian structure is integrable \cite{DP} and has traveling wave solutions \cite{Lenells}. Although DP is similar to CH  in several aspects, these two equations are truly
different. One of the novel features of  DP different from  CH is that it has not only peakon solutions \cite{DP} and periodic peakon solutions \cite{YinJFA}, but also shock peakons \cite{Lundmark2007} and the periodic shock waves \cite{Escher}.

For the Novikov equation, Hone-Wang \cite{Home2008} derived the Lax pair  which is given by
\bbal
\left(\begin{array}{l}\psi_1 \\ \psi_2 \\ \psi_3\end{array}\right)_x=U(m, \lambda)\left(\begin{array}{l}\psi_1 \\ \psi_2 \\ \psi_3\end{array}\right), \quad\left(\begin{array}{l}\psi_1 \\ \psi_2 \\ \psi_3\end{array}\right)_t=V(m, u, \lambda)\left(\begin{array}{l}\psi_1 \\ \psi_2 \\ \psi_3\end{array}\right),
\end{align*}
where $m=u-u_{x x}$ and the matrices $U$ and $V$ are defined by
\bbal
U(m, \lambda)=\left(\begin{array}{ccc}
0 & \lambda m & 1 \\
0 & 0 & \lambda m \\
1 & 0 & 0
\end{array}\right)
\end{align*}
and
\bbal V(m, u, \lambda)=\left(\begin{array}{lrr}
\frac{1}{3 \lambda^2}-u u_x & \frac{u_x}{\lambda}-\lambda m u^2 & u_x^2 \\
\frac{u}{\lambda} & -\frac{2}{3 \lambda^2} & -\frac{u_x}{\lambda}-\lambda m u^2 \\
-u^2 & \frac{u}{\lambda} & \frac{1}{3 \lambda^2}+u u_x
\end{array}\right) .
\end{align*}
NE possesses peakon traveling wave solutions \cite{HM,HLS,GH}, which on the real line are given by the formula
$
u(x, t)=\pm \sqrt{c} e^{-|x-c t|}
$
where $c>0$ is the wave speed. In fact, NE admits multi-peakon traveling wave solutions on both the line and the circle. More precisely, on the line the $n$-peakon
$$
u(x, t)=\sum_{j=1}^n p_j(t) e^{-\left|x-q_j(t)\right|}
$$
is a solution to $\mathrm{NE}$ if and only if the positions $\left(q_1, \ldots, q_n\right)$ and the momenta $\left(p_1, \ldots, p_n\right)$ satisfy the following system of $2 n$ differential equations:
$$
\left\{\begin{aligned}
\frac{\dd q_j}{\dd t} &=u^2(q_j), \\
\frac{\dd p_j}{\dd t} &=-u(q_j) u_x(q_j) p_j .
\end{aligned}\right.
$$
Furthermore, it has infinitely many conserved quantities. Like CH, the most important quantity conserved by a solution $u$ to NE is its $H^1$-norm
$$
\|u\|_{H^1}^2 =\int_{\mathbb{R}}\big[u^2+u_x^2\big] \dd x.
$$

We say that the Cauchy problem \eqref{dp} is locally well-posed
in a Banach space $X$ if the following three conditions hold
\begin{enumerate}
  \item (Local existence)\; For any initial data $u_0\in X$, there exists a short time $T = T(u_0) > 0$ and a solution $\mathbf{S}_{t}(u_0)\in\mathcal{C}([0,T),X)$ to the Cauchy problem \eqref{dp};
  \item (Uniqueness)\; This solution $\mathbf{S}_{t}(u_0)$ is unique in the space $\mathcal{C}([0,T),X)$;
  \item (Continuous Dependence)\; The data-to-solution map $u_0 \mapsto \mathbf{S}_{t}(u_0)$ is continuous in the following
sense:  for any $T_1 < T$ and $\varepsilon > 0$, there exists $ \delta> 0$, such that if $\|u_0-\widetilde{u}_0\|_{X}\leq \delta$,
then $\mathbf{S}_{t}(\widetilde{u}_0)$ exists up to $T_1$ and
$$\|\mathbf{S}_{t}(u_0)-\mathbf{S}_{t}(\widetilde{u}_0)\|_{\mathcal{C}([0,T),X)}\leq \varepsilon.$$
\end{enumerate}

The well-posedness of the Camassa--Holm type equations has been widely investigated during the past 20 years.
The local well-posedness for the Cauchy problem of CH \cite{LO,GB,Dan2,L16} and NE \cite{HH,HH0,N,Wu1,Wu2,Yan1,Yan2,Zhou} in Sobolev and Besov spaces $B_{p, r}^s(\mathbb{R})$ with  $s>\max\{1+1/{p}, 3/{2}\}$ and $(p,r)\in[1,\infty]\times[1,\infty)$ has been established.
In our recent papers \cite{Li22,Li22-jee}, we established the ill-posedness for CH in $B^s_{p,\infty}(\mathbb{R})$  with $p\in[1,\infty]$ by proving the solution map  starting from $u_0$ is discontinuous at $t = 0$ in the metric of $B^s_{p,\infty}(\mathbb{R})$.
Guo-Liu-Molinet-Yin \cite{Guo2019} established the ill-posedness for
the Camassa--Holm type equations in $B_{p,r}^{1+1/p}(\mathbb{R})$ with $(p,r)\in[1,\infty]\times(1,\infty]$ by proving the norm inflation, which implies that $B_{p, 1}^{1+1/p}$ is the critical Besov space for both CH and NE.  Ye-Yin-Guo \cite{Ye} obtained the local well-posedness for the Camassa--Holm type equation in critical Besov spaces $B^{1+1/p}_{p,1}(\R)$ with $p\in[1,\infty)$.  Yang \cite{Yang} obtained well-posedness of CH in the Triebel-Lizorkin spaces $F^{s}_{p,r}$ with $1< p, r<\infty, s>\max\{3/2,1+1/p\}$ and presented a blow-up criterion of solutions. Chae in \cite{C1,C2} and Chen-Miao-Zhang in \cite{CMZ} studied the existence and uniqueness of the Euler and MHD equations in general Triebel–Lizorkin spaces, respectively. To the best of our knowledge, whether NE is well-posed or not in $F^{s}_{p,r}$ is still an open problem.

Setting $\Lambda^{-2}=(1-\pa^2_x)^{-1}$, then $\Lambda^{-2}f=G*f$ where $G(x)=\fr12e^{-|x|}$ is the kernel of the operator $\Lambda^{-2}$. We can transform the Novikov equation into the following transport type equation
\begin{equation}\label{5}
\begin{cases}
u_t+u^2u_x=\mathbf{P}(u)=\mathbf{P}_1(u)+\mathbf{P}_2(u),\\
u(x,t=0)=u_0(x),
\end{cases}
\end{equation}
where
\begin{equation}\label{6}
\mathbf{P}_1(u)=-\frac12\Lambda^{-2}u_x^3\quad\text{and}\quad \mathbf{P}_2(u)=-\pa_x\Lambda^{-2}\left(\frac32uu^2_x+u^3\right).
\end{equation}

We can now state our main result as follows.
\begin{theorem}\label{the1}
Assume that $u_0\in F^{s}_{p,r}$ with $1< p, r<\infty$ and $s>\max\{\frac32,1+\frac1p\}$. Then there exists some time $T>0$ such that
\begin{enumerate}
  \item system \eqref{5}-\eqref{6} has a solution $u\in E^{s}_{p,r}(T):=\mathcal{C}([0,T]; F^s_{p,r})\cap \mathcal{C}^1{([0,T]; F^{s-1}_{p,r})}$;
  \item the solutions of \eqref{5}-\eqref{6} are unique;
  \item the data-to-solution map $u_0 \mapsto u(t)$ is continuous from any bounded
subset of $u_0\in F^s_{p,r}$ into $\mathcal{C}([0,T],F^s_{p,r})$;
\item the data-to-solution map $u_0 \mapsto u(t)$ is not uniformly continuous from any bounded subset of $u_0\in F^s_{p,r}$ into $\mathcal{C}([0,T],F^s_{p,r})$.
\end{enumerate}
\end{theorem}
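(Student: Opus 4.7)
\noindent\emph{Existence (1).} The plan is to adapt the classical Kato--Majda iteration scheme to the Triebel--Lizorkin setting. I would set $u^{(0)}\equiv u_0$ and define $u^{(n+1)}$ inductively as the solution of the linear transport problem
\begin{equation*}
\pa_t u^{(n+1)} + (u^{(n)})^2\pa_x u^{(n+1)} = \mathbf{P}(u^{(n)}),\qquad u^{(n+1)}|_{t=0}=u_0.
\end{equation*}
The engine of this iteration is the linear transport estimate in $F^{s}_{p,r}$ developed by Chae and Chen--Miao--Zhang: for $\pa_t v + a\pa_x v = f$ one has
\begin{equation*}
\|v(t)\|_{F^{s}_{p,r}} \lesssim e^{C\int_0^t V(\tau)\dd\tau}\Bigl(\|v_0\|_{F^{s}_{p,r}} + \int_0^t\|f(\tau)\|_{F^{s}_{p,r}}\dd\tau\Bigr),
\end{equation*}
with $V(\tau) \lesssim \|\pa_x a(\tau)\|_{L^\infty}+\|\pa_x a(\tau)\|_{F^{s-1}_{p,r}}$. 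Combined with the Moser-type product inequality $\|fg\|_{F^s_{p,r}} \lesssim \|f\|_{L^\infty}\|g\|_{F^s_{p,r}}+\|g\|_{L^\infty}\|f\|_{F^s_{p,r}}$ and the embedding $F^s_{p,r}\hookrightarrow L^\infty$ (valid under $s>1+1/p$), this yields a uniform bound $\|u^{(n)}\|_{L^\infty_T F^s_{p,r}}\leq 2\|u_0\|_{F^s_{p,r}}$ on a common interval $[0,T]$ with $T$ of order $\|u_0\|_{F^s_{p,r}}^{-2}$. A standard Cauchy argument in $F^{s-1}_{p,r}$ extracts a limit $u$, Fatou-type passages give $u\in L^\infty([0,T];F^s_{p,r})$, and the equation itself promotes this to $u\in E^s_{p,r}(T)$.

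\medskip
\noindent\emph{Uniqueness and continuity of the flow (2)--(3).} Subtracting two solutions $u,\tilde u$, the difference $w:=u-\tilde u$ solves
\begin{equation*}
\pa_t w + u^2\pa_x w = -(u+\tilde u)w\,\pa_x \tilde u + \mathbf{P}(u)-\mathbf{P}(\tilde u),
\end{equation*}
which I would control in the weaker norm $F^{s-1}_{p,r}$ by the same transport machinery together with a Lipschitz-type bound $\|\mathbf{P}(u)-\mathbf{P}(\tilde u)\|_{F^{s-1}_{p,r}}\lesssim (\|u\|_{F^{s}_{p,r}}^2+\|\tilde u\|_{F^{s}_{p,r}}^2)\|w\|_{F^{s-1}_{p,r}}$; Gronwall then yields uniqueness and $L^\infty_T F^{s-1}_{p,r}$-continuity of the data-to-solution map. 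To upgrade to $L^\infty_T F^{s}_{p,r}$-continuity I would run the Bona--Smith argument: regularize the initial datum by a dyadic frequency truncation $u_0^{\varepsilon}=S_{1/\varepsilon}u_0$, apply part (1) at regularity $s$ uniformly in $\varepsilon$ and at regularity $s+1$ with loss $\varepsilon^{-1}$, and interpolate between the $F^{s-1}_{p,r}$ convergence of solution differences and the $F^{s+1}_{p,r}$ boundedness to recover convergence in $F^{s}_{p,r}$.

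\medskip
\noindent\emph{Non-uniform continuity (4).} The plan is to exhibit two sequences of initial data whose distance tends to $0$ in $F^{s}_{p,r}$ but whose corresponding solutions stay separated at any small positive time. Since the Novikov nonlinearity transports at speed $u^2$, producing an order-one phase separation at frequency $n$ after a fixed time requires a low-frequency background of size $n^{-1/2}$. I would take
\begin{equation*}
u^{\pm}_{0,n}(x) := c_{\pm}\,n^{-1/2}\,\phi(x) + n^{-s}\,\phi(x)\cos(nx),\qquad n\gg 1,
\end{equation*}
with $\phi\in C_c^\infty(\R)$ a fixed bump and $c_{\pm}\in\R$ chosen so that $c_+^2\neq c_-^2$; then $\|u^{+}_{0,n}-u^{-}_{0,n}\|_{F^{s}_{p,r}}\lesssim n^{-1/2}\to 0$. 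A WKB-type analysis would then show that the actual solutions $u^{\pm}_n$ remain $o(1)$-close in $F^{s}_{p,r}$, on a short interval $[0,T_0]$ independent of $n$, to the ansatz
\begin{equation*}
\tilde u^{\pm}_n(t,x):= c_{\pm}n^{-1/2}\phi(x) + n^{-s}\phi(x)\cos\bigl(nx - c_{\pm}^2\,t\,\phi^2(x)\bigr),
\end{equation*}
while the difference $\tilde u^{+}_n(t)-\tilde u^{-}_n(t)$ has $F^{s}_{p,r}$-norm uniformly bounded below for $t\in(0,T_0]$ by a positive constant produced by the phase mismatch $(c_+^2-c_-^2)t\phi^2(x)$; transferring this lower bound to $u^{\pm}_n$ contradicts uniform continuity of the solution map.

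\medskip
\noindent The main obstacle, in my view, lies in part (4): the WKB error must be genuinely $o(1)$ in the very space $F^{s}_{p,r}$ where the lower bound is measured, since any residual term of order $\|\phi\|_{F^{s}_{p,r}}$ would destroy the phase separation. This requires sharp product and commutator estimates in $F^{s}_{p,r}$, whose Littlewood--Paley toolbox is more delicate than the Besov one because the orders of $L^p$ and $\ell^r$ summation are interchanged; the refinements due to Chae and Chen--Miao--Zhang are precisely what make parts (1)--(3) work as well.
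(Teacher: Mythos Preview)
Your scheme for parts (1)--(3) has a genuine gap at exactly the point the paper identifies as its main novelty. You propose to run the Cauchy argument, the uniqueness, and the Bona--Smith continuity all in $F^{s-1}_{p,r}$, invoking a Lipschitz bound
\[
\|\mathbf{P}(u)-\mathbf{P}(\tilde u)\|_{F^{s-1}_{p,r}}\lesssim \bigl(\|u\|_{F^{s}_{p,r}}^2+\|\tilde u\|_{F^{s}_{p,r}}^2\bigr)\|w\|_{F^{s-1}_{p,r}}.
\]
But expanding, say, $\pa_x\Lambda^{-2}\bigl(uu_x^2-\tilde u\tilde u_x^2\bigr)$ produces the piece $\pa_x\Lambda^{-2}\bigl(u(u_x+\tilde u_x)\,w_x\bigr)$, and placing this in $F^{s-1}_{p,r}$ forces the product $u(u_x+\tilde u_x)\,w_x$ into $F^{s-2}_{p,r}$. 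Closing in $\|w\|_{F^{s-1}_{p,r}}$ you only know $w_x\in F^{s-2}_{p,r}$, so you need a product law of the type $F^{s-1}_{p,r}\cdot F^{s-2}_{p,r}\hookrightarrow F^{s-2}_{p,r}$. For $s$ near $\max\{3/2,1+1/p\}$ the index $s-2$ is negative, and in the Triebel--Lizorkin scale---where the $L^p$ and $\ell^r$ norms are nested in the opposite order---such a negative-index product estimate is \emph{not} available from the Chae/Chen--Miao--Zhang toolbox you invoke; the paraproduct piece $T_{w_x}g$ would require control of $w_x$ in an $L^\infty$-based negative H\"older class that $F^{s-2}_{p,r}$ does not supply. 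The paper's remedy (Remark~1.2) is to abandon $F^{s-1}_{p,r}$ altogether for the difference estimates and work instead in the Besov space $B^{s-1}_{p,\infty}$, where the analogue $\|fg\|_{B^{s-2}_{p,\infty}}\lesssim\|f\|_{B^{s-2}_{p,\infty}}\|g\|_{B^{s-1}_{p,\infty}}$ (Lemma~2.2) \emph{is} valid. Convergence of the iterates, uniqueness, and the Bona--Smith step are then all carried out in $B^{s-1}_{p,\infty}$, with interpolation through $B^s_{p,1}$ to recover $F^s_{p,r}$-continuity at the end.

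For part (4) your approach is not wrong but is genuinely different from the paper's. You propose the classical Himonas--Kenig WKB construction with phase-shifted approximate solutions $\tilde u^{\pm}_n$; the delicate step, as you note, is controlling the WKB remainder in $F^{s}_{p,r}$ itself. The paper bypasses the approximate-solution machinery entirely: it proves a second-order Taylor bound $\|\mathbf{S}_t(u_0)-u_0+t\mathbf{v}_0\|_{F^s_{p,r}}\lesssim t^2$ (Proposition~3.1, with $\mathbf{v}_0=u_0^2\pa_xu_0-\mathbf{P}(u_0)$), then takes two initial data $f_n+g_n$ and $f_n$ with $g_n=2^{-n/2}\phi$ a low-frequency perturbation and $f_n=2^{-ns}\phi\sin(\tfrac{17}{12}2^nx)$ a high-frequency bump, so that $\|g_n\|_{F^s_{p,r}}\to 0$ while the leading difference term $t\,g_n^2\pa_xf_n$ has $F^s_{p,\infty}$-norm bounded below by a fixed constant. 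This reduces non-uniform dependence to a single explicit Fourier-side computation and avoids any WKB error analysis in $F^s_{p,r}$; your route would require substantially more work to carry through in the Triebel--Lizorkin scale.
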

\begin{remark}\label{re0}
Although the existence and uniqueness of the CH equation has been studied in \cite{Yang}, continuous and non-uniform
continuous dependence with respect to initial data seems empty. Compared with the CH equation, the structure of the Novikov equation is more difficult. Based on this point, the proof of Theorem \ref{the1} is not obvious. Of course, Theorem \ref{the1} holds for the CH equation.
\end{remark}
\begin{remark}\label{re1}
We should mention that,  in \cite{Yang}, the estimate of $\mathcal{B}(f,g)=\pa_x\Lambda^{-2}\left(fg+\fr12\pa_xf\pa_xg\right)$ in $F_{p,r}^{s-1}$ play a key role in the  convergence of the approximating sequence and uniqueness of solutions. However, the bilinear estimates in $F_{p,r}^{s-1}$ is not available due to the lack of product law in $F_{p,r}^{s-2}$. The same question also appears for the  Novikov equation. To overcome this difficulty, our new idea is to work in the weaker space $B_{p,\infty}^{s-1}$ instead of $F_{p,r}^{s-1}$. In fact, when proving the  convergence of the approximating sequence, uniqueness, continuous and non-uniform continuous dependence with respect to initial data, we always encounter the estimation of the  bilinear term $\mathcal{B}(f,g)$ in $B_{p,\infty}^{s-1}$.
\end{remark}
\quad{\bf Notation}\;
 $C$ stands for some positive constant independent of $n$, which may vary from line to line.
The symbol $A\approx B$ means that $C^{-1}B\leq A\leq CB$.
 Given a Banach space $X$, we denote its norm by $\|\cdot\|_{X}$. We shall use the simplified notation $\|f,\cdots,g\|_X=\|f\|_X+\cdots+\|g\|_X$ if there is no ambiguity.
 For $I\subset\R$, we denote by $\mathcal{C}(I;X)$ the set of continuous functions on $I$ with values in $X$.
\section{Preliminaries}\label{sec2}

\qquad For all $f\in \mathcal{S}'$, the Fourier transform $\widehat{f}$ is defined by
$
(\mathcal{F} f)(\xi)=\int_{\R}e^{-\mathrm{i}x\xi}f(x)\dd x$ for any $\xi\in\R.
$
 The inverse Fourier transform of any $g$ is given by
$
(\mathcal{F}^{-1} g)(x)=\check{g}(x)=\frac{1}{2 \pi} \int_{\R} e^{\mathrm{i} x \xi}g(\xi)  \dd \xi.
$
Next, we will recall some facts about the Littlewood-Paley decomposition and the nonhomogeneous Besov spaces (see \cite{BCD} for more details).
Choose a radial, non-negative, smooth function $\vartheta:\R\mapsto [0,1]$ such that
\begin{itemize}
  \item ${\rm{supp}} \;\vartheta\subset B(0, 4/3)$;
  \item $\vartheta(\xi)\equiv1$ for $|\xi|\leq3/4$.
\end{itemize}
Setting $\varphi(\xi):=\vartheta(\xi/2)-\vartheta(\xi)$, then we deduce that $\varphi$ has the following properties
\begin{itemize}
  \item ${\rm{supp}} \;\varphi\subset \left\{\xi\in \R: 3/4\leq|\xi|\leq8/3\right\}$;
  \item $\varphi(\xi)\equiv 1$ for $4/3\leq |\xi|\leq 3/2$;
  \item $\vartheta(\xi)+\sum_{j\geq0}\varphi(2^{-j}\xi)=1$ for any $\xi\in \R$.
\end{itemize}

For every $u\in \mathcal{S'}(\mathbb{R})$, the inhomogeneous dyadic blocks ${\Delta}_j$ are defined as follows
\bbal
\Delta_ju=0,\; \text{if}\; j\leq-2;\quad
\Delta_{-1}u=\vartheta(D)u;\quad
\Delta_ju=\varphi(2^{-j}D)u,\; \; \text{if}\;j\geq0,
\end{align*}
where the pseudo-differential operator $\sigma(D):u\to\mathcal{F}^{-1}(\sigma \mathcal{F}u)$.

The inhomogeneous low-frequency cut-off operator $S_{j}$ is defined by
$$
S_j u=\sum_{-1\leq q\leq j-1}{\Delta}_qu.
$$
Let $s\in\mathbb{R}$ and  $1\leq p<\infty, 1 \leq q \leq \infty$. The inhomogeneous Triebel-Lizorkin spaces $F_{p, q}^s=F_{p, q}^s(\mathbb{R})$ is defined by
$$
F_{p, q}^s:=\left\{f \in \mathcal{S}^{\prime}\left(\mathbb{R}\right),\|f\|_{F_{p, q}^s}<\infty\right\},
$$
where
$$
\|f\|_{F_{p, q}^s}:=\left\|\left(\left|S_0 f\right|^q+\sum_{j \geq 0} 2^{j s q}\left|\Delta_j f\right|^q\right)^{\frac{1}{q}}\right\|_{L_x^p},
$$
Let $\mathcal{S}^{\prime} \backslash \mathcal{P}$ denote the tempered distribution modulo the polynomials, then
$$
\dot{F}_{p, q}^s:=\left\{f \in \mathcal{S}^{\prime} \backslash \mathcal{P},\|f\|_{\dot{F}_{p, q}^s}<\infty\right\},
$$
where
$$
\|f\|_{\dot{F}_{p, q}^s}:=\left\|\left(\sum_{j \in \mathbb{Z}} 2^{j s q}\left|\Delta_j f\right|^q\right)^{\frac{1}{q}}\right\|_{L_x^p} .
$$
We remark that for any $s>0$,
$$
\|f\|_{F_{p, q}^s} \sim\|f\|_{L^p}+\|f\|_{\dot{F}_{p, q}^s}, \quad 1 \leq p<\infty, \quad 1 \leq q \leq \infty .
$$
Let $s\in\mathbb{R}$ and $(p,r)\in[1, \infty]^2$. The nonhomogeneous Besov space $B^{s}_{p,r}(\R)$ is defined by
\begin{align*}
B^{s}_{p,r}(\R):=\Big\{f\in \mathcal{S}'(\R):\;\|f\|_{B^{s}_{p,r}(\mathbb{R})}<\infty\Big\},
\end{align*}
where
\begin{numcases}{\|f\|_{B^{s}_{p,r}(\mathbb{R})}=}
\left(\sum_{j\geq-1}2^{sjr}\|\Delta_jf\|^r_{L^p(\mathbb{R})}\right)^{1/r}, &if $1\leq r<\infty$,\nonumber\\
\sup_{j\geq-1}2^{sj}\|\Delta_jf\|_{L^p(\mathbb{R})}, &if $r=\infty$.\nonumber
\end{numcases}

We shall use Bony's decomposition \cite{BCD} in the nonhomogeneous context throughout this paper
\begin{align*}
uv={T}_{u}v+{T}_{v}u+{R}(u,v)\quad\text{with}
\end{align*}
\begin{eqnarray*}
{T}_{u}v=\sum_{j\geq-1}{S}_{j-1}u{\Delta}_jv \quad\mbox{and} \quad{R}(u,v)=\sum_{|j-k|\leq1}{\Delta}_ju{\Delta}_kv.
\end{eqnarray*}
Next we establish the Moser type inequality for the Triebel-Lizorkin spaces.
\begin{lemma}[\cite{C1}]\label{mos} Let $(p, q) \in(1, \infty) \times(1, \infty]$ or $p=q=\infty, s>0$. There exists some positive constant $C$ with the following property:
$$
\|f g\|_{\dot{F}_{p, q}^s} \leq C\left(\|f\|_{L^{\infty}}\|g\|_{\dot{F}_{p, q}^s}+\|g\|_{L^{\infty} \|}\|f\|_{\dot{F}_{p, q}^s}\right)
$$
\end{lemma}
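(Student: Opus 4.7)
The plan is to prove the product estimate by applying Bony's paraproduct decomposition
$fg = T_f g + T_g f + R(f,g)$
and estimating each of the three pieces separately in $\dot{F}^s_{p,q}$. The two paraproducts naturally produce one of the two right-hand-side terms each, while the symmetric remainder $R(f,g)$ can be bounded by either.

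For $T_g f = \sum_j S_{j-1}g\,\Delta_j f$, each summand is Fourier-supported in an annulus of size $\sim 2^j$, so for every $j'$ only finitely many indices $|j-j'|\le N_0$ contribute to $\Delta_{j'} T_g f$. Combined with $\|S_{j-1}g\|_{L^\infty}\le C\|g\|_{L^\infty}$ (a consequence of the uniform $L^1$ bound on the low-frequency kernel), this gives the pointwise estimate
$$|\Delta_{j'} T_g f|(x)\le C\|g\|_{L^\infty}\sum_{|j-j'|\le N_0}|\Delta_j f|(x).$$
Multiplying by $2^{j's}$, taking $\ell^q$ in $j'$ and $L^p$ in $x$ yields $\|T_g f\|_{\dot{F}^s_{p,q}}\le C\|g\|_{L^\infty}\|f\|_{\dot{F}^s_{p,q}}$, and by symmetry $\|T_f g\|_{\dot{F}^s_{p,q}}\le C\|f\|_{L^\infty}\|g\|_{\dot{F}^s_{p,q}}$.

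For the remainder $R(f,g)=\sum_k \Delta_k f\,\widetilde{\Delta}_k g$ with $\widetilde{\Delta}_k := \Delta_{k-1}+\Delta_k+\Delta_{k+1}$, each summand has Fourier support in a ball of radius $\sim 2^k$, so $\Delta_{j'} R(f,g)=\sum_{k\ge j'-N_0}\Delta_{j'}(\Delta_k f\,\widetilde{\Delta}_k g)$. Viewing $\Delta_{j'}$ as a convolution and combining $\|\Delta_k f\|_{L^\infty}\le C\|f\|_{L^\infty}$ with the standard pointwise bound $|h_{j'}\ast u|(x)\le C\mathcal{M}u(x)$, where $h_{j'}$ is the kernel of $\Delta_{j'}$ and $\mathcal{M}$ denotes the Hardy--Littlewood maximal operator, we obtain
$$|\Delta_{j'}R(f,g)|(x)\le C\|f\|_{L^\infty}\sum_{k\ge j'-N_0}\mathcal{M}(\widetilde{\Delta}_k g)(x).$$
Since $s>0$, Young's inequality in $\ell^q(\mathbb{Z})$ with the decaying factor $2^{(j'-k)s}$ then produces the pointwise $\ell^q$-bound
$$\Bigl(\sum_{j'}2^{j'sq}|\Delta_{j'}R(f,g)|^q\Bigr)^{1/q}\!(x)\le C\|f\|_{L^\infty}\Bigl(\sum_k 2^{ksq}|\mathcal{M}\widetilde{\Delta}_k g|^q(x)\Bigr)^{1/q}.$$
The Fefferman--Stein vector-valued maximal inequality in $L^p(\ell^q)$ absorbs $\mathcal{M}$ and closes the estimate as $\|R(f,g)\|_{\dot{F}^s_{p,q}}\le C\|f\|_{L^\infty}\|g\|_{\dot{F}^s_{p,q}}$, with the symmetric bound available by exchanging the roles of $f$ and $g$.

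The decisive step is the last one: the exchange of the maximal operator with the $\ell^q$-summation inside $L^p$ rests precisely on the Fefferman--Stein theorem, which is what forces the hypotheses $1<p<\infty$ and $1<q\le\infty$ in the statement. The paraproducts, being spectrally localized in annuli, are soft by comparison. The excluded endpoint $p=q=\infty$ reduces to $\dot{B}^s_{\infty,\infty}$, where the classical Besov-space Moser inequality applies directly.
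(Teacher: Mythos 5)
The paper itself does not prove this lemma: it is quoted directly from Chae \cite{C1}, so the only meaningful comparison is with the standard argument of that reference, which is exactly the Bony-decomposition-plus-vector-valued-maximal-inequality scheme you follow. Your overall architecture (paraproducts $T_fg$, $T_gf$, remainder $R(f,g)$, maximal function domination, Fefferman--Stein) is the right one, and your treatment of the remainder --- the $L^\infty$ bound on $\Delta_k f$, the pointwise bound $|h_{j'}\ast u|\le C\mathcal{M}u$, the $\ell^1$ convolution kernel $2^{(j'-k)s}$ for $s>0$, then Fefferman--Stein in $L^p(\ell^q)$ --- is correct.

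However, one step is wrong as written, and it is precisely the step where you declare the paraproducts ``soft''. From $\Delta_{j'}T_g f=\sum_{|j-j'|\le N_0}\Delta_{j'}(S_{j-1}g\,\Delta_j f)$ you cannot deduce the pointwise inequality $|\Delta_{j'}T_g f|(x)\le C\|g\|_{L^\infty}\sum_{|j-j'|\le N_0}|\Delta_j f|(x)$: the operator $\Delta_{j'}$ is a convolution, and $\Delta_{j'}(S_{j-1}g\,\Delta_j f)(x)$ is not controlled by the values of $\Delta_j f$ at the point $x$ (it need not even vanish where $\Delta_j f$ does). What the kernel actually yields is $|\Delta_{j'}(S_{j-1}g\,\Delta_j f)(x)|\le C\|g\|_{L^\infty}\mathcal{M}(\Delta_j f)(x)$, i.e.\ the same maximal-function majorization you use for $R(f,g)$. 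Hence the paraproduct terms also require the Fefferman--Stein inequality: this is exactly the difference between the Triebel--Lizorkin and Besov settings, since here the $\ell^q$ sum sits inside the $L^p$ norm and uniform scalar $L^p$-boundedness of $\Delta_{j'}$ does not suffice. Your concluding claim that Fefferman--Stein is decisive only for the remainder is therefore inaccurate, although the repair is immediate --- insert $\mathcal{M}$ and apply the same vector-valued inequality, which is what forces $1<p<\infty$, $1<q\le\infty$ for all three pieces. Two minor points: the case $p=q=\infty$ is \emph{included} in the statement, not excluded, but your reduction to $\dot{B}^s_{\infty,\infty}=\dot{F}^s_{\infty,\infty}$ and the classical Besov Moser estimate handles it correctly; and since the lemma concerns the homogeneous space $\dot{F}^s_{p,q}$, you should say explicitly that the homogeneous decomposition (sums over $j\in\mathbb{Z}$) is being used.
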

\begin{lemma}[\cite{BCD}]\label{le1}
Let $(p,r)\in[1, \infty]^2$ and $s>\max\big\{1+\frac1p,\frac32\big\}$. Then we have
\bbal
&\|fg\|_{B^{s-2}_{p,r}}\leq C\|g\|_{B^{s-2}_{p,r}}\|g\|_{B^{s-1}_{p,r}}.
\end{align*}
\end{lemma}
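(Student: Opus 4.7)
The printed right-hand side is clearly a typographical slip (both factors are $g$); the intended bilinear estimate, and the one actually needed by Remark \ref{re1}, is
\[
\|fg\|_{B^{s-2}_{p,r}}\leq C\|f\|_{B^{s-1}_{p,r}}\|g\|_{B^{s-2}_{p,r}}.
\]
This is a special case of the Besov product laws in \cite{BCD}, and I would prove it via Bony's decomposition $fg=T_fg+T_gf+R(f,g)$ with each piece estimated in $B^{s-2}_{p,r}$.

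For $T_fg=\sum_jS_{j-1}f\,\Delta_jg$ each summand is Fourier-localized in a dyadic annulus of size $2^j$, so the classical paraproduct bound gives $\|T_fg\|_{B^{s-2}_{p,r}}\lesssim\|f\|_{L^\infty}\|g\|_{B^{s-2}_{p,r}}$, and the embedding $B^{s-1}_{p,r}\hookrightarrow L^\infty$ (valid for $s-1>1/p$) upgrades $\|f\|_{L^\infty}$ to $\|f\|_{B^{s-1}_{p,r}}$. For $T_gf=\sum_jS_{j-1}g\,\Delta_jf$ I would place the low-frequency block in $L^\infty$ by Bernstein, $\|\Delta_kg\|_{L^\infty}\lesssim 2^{-k(s-2-1/p)}d_k\|g\|_{B^{s-2}_{p,r}}$, sum over $k\leq j-2$ (the extremal $k=j-2$ term dominates when $s<2+1/p$, while for $s>2+1/p$ the sum is uniformly bounded), and couple with $\|\Delta_jf\|_{L^p}\lesssim 2^{-j(s-1)}c_j\|f\|_{B^{s-1}_{p,r}}$. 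The $2^{j(s-2)}$-weighted product yields a sequence that is $\ell^r$-summable precisely because $s>1+1/p$.

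The remainder $R(f,g)=\sum_k\Delta_kf\,\widetilde{\Delta}_kg$ is the main obstacle. A direct $L^\infty\cdot L^p$ Hölder pairing produces $\|\Delta_kf\,\widetilde{\Delta}_kg\|_{L^p}\lesssim 2^{-k(2s-3-1/p)}c_kd_k$, and the naive summation $\sum_{k\geq j-N_0}$ followed by the $2^{j(s-2)}$ weight would require $2s-3-1/p>0$, i.e.\ $s>3/2+1/(2p)$, which is strictly stronger than $s>\max\{3/2,1+1/p\}$ whenever $1<p<\infty$. The correct route is the classical \cite{BCD} remainder estimate $\|R(f,g)\|_{B^{s_1+s_2-1/p}_{p,r}}\lesssim\|f\|_{B^{s_1}_{p,r}}\|g\|_{B^{s_2}_{p,r}}$, valid whenever $s_1+s_2>\max\{0,2/p-1\}$, which places $R(f,g)$ in the higher-regularity space $B^{2s-3-1/p}_{p,r}$; this hypothesis is guaranteed by $s>3/2$ when $p\geq 2$ and by $s>1+1/p$ when $p<2$. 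One then descends via the Besov embedding $B^{2s-3-1/p}_{p,r}\hookrightarrow B^{s-2}_{p,r}$, valid precisely when $s\geq 1+1/p$. The low-frequency piece $\|fg\|_{L^p}\leq\|f\|_{L^\infty}\|g\|_{L^p}\lesssim\|f\|_{B^{s-1}_{p,r}}\|g\|_{B^{s-2}_{p,r}}$ is handled by the same $L^\infty$ embedding, closing the inhomogeneous estimate.
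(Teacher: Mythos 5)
The paper offers no proof of Lemma \ref{le1} at all---it is quoted from \cite{BCD}---so there is nothing internal to compare with; your Bony-decomposition argument is exactly the standard proof underlying the cited product law. Your reading of the misprint is also the right one: the way the lemma is used in \eqref{zyl2} (e.g.\ $\|(u_x)^3\|_{B^{s-2}_{p,\infty}}\lesssim\|u_x\|_{B^{s-2}_{p,\infty}}\|(u_x)^2\|_{B^{s-1}_{p,\infty}}$) shows the intended estimate is $\|fg\|_{B^{s-2}_{p,r}}\leq C\|f\|_{B^{s-2}_{p,r}}\|g\|_{B^{s-1}_{p,r}}$, which is what you prove. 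Your handling of the two paraproducts and especially of the remainder---mapping $R:B^{s-1}_{p,r}\times B^{s-2}_{p,r}\to B^{2s-3-1/p}_{p,r}$ under $2s-3>\max\{0,2/p-1\}$ and then embedding into $B^{s-2}_{p,r}$ thanks to $s>1+1/p$---is precisely how one avoids the spurious requirement $s>3/2+1/(2p)$, and your case split $p\ge2$ versus $p<2$ is correct. One sentence should be deleted or repaired: the ``low-frequency piece'' bound $\|fg\|_{L^p}\le\|f\|_{L^\infty}\|g\|_{L^p}\lesssim\|f\|_{B^{s-1}_{p,r}}\|g\|_{B^{s-2}_{p,r}}$ rests on $B^{s-2}_{p,r}\hookrightarrow L^p$, which fails when $s<2$ (a range allowed here, e.g.\ $3/2<s<2$ with $p\ge2$). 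Fortunately it is also unnecessary: since you work with the nonhomogeneous paraproducts and remainder of Section \ref{sec2}, the three estimates you establish already control the full inhomogeneous $B^{s-2}_{p,r}$ norm, and no separate $L^p$ term arises. A last cosmetic point: at the borderline $s=2+1/p$ in the $T_gf$ term the low-frequency sum grows at most like a power of $j$, which the surviving factor $2^{-j}$ absorbs, so your dichotomy still closes.
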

\begin{lemma}\label{com1}
  Let $(p,q)\in (1,\infty)\times (1,\infty]$ or $p=q=\infty$. Then for $s>0$
\bal\label{appendix-A}
  \big\| \|2^{ks}([f,\Delta_k]\cdot \nabla g)\|_{\ell^q(\mathbb{Z})} \big\|_{L^p} \leq C\big(\|\nabla f\|_\infty \|g\|_{\F^s_{p,q}}+\|\nabla g\|_\infty \|f\|_{\F^s_{p,q}} \big)
\end{align}
and
\bal\label{appendix-B}
  \big\| \|2^{ks}([f,\Delta_k]\cdot \nabla g)\|_{\ell^q(\mathbb{Z})} \big\|_{L^p} \leq C\big(\|\nabla f\|_\infty \|g\|_{\F^s_{p,q}}+\|g\|_{\infty} \|\nabla f\|_{\F^s_{p,q}} \big).
\end{align}
\end{lemma}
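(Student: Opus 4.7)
The plan is to decompose the commutator via Bony's paraproduct decomposition and estimate each piece using the mean value theorem, the vector-valued Fefferman-Stein maximal inequality, and, for the second estimate, a suitable integration by parts. Writing
\[
f\nabla g=T_f\nabla g+T_{\nabla g}f+R(f,\nabla g),
\]
and using that $\Delta_k(f\nabla g)-f\Delta_k\nabla g=-[f,\Delta_k]\nabla g$, we get the identity
\[
[f,\Delta_k]\nabla g=[T_f,\Delta_k]\nabla g+\bigl(T_{\Delta_k\nabla g}f-\Delta_k T_{\nabla g}f\bigr)+\bigl(R(f,\Delta_k\nabla g)-\Delta_k R(f,\nabla g)\bigr).
\]
The goal is to bound each of the three groups on the right so that after multiplying by $2^{ks}$, taking the $\ell^q(\mathbb Z)$ norm and then the $L^p_x$ norm, the result is dominated by the right-hand side of \eqref{appendix-A} or \eqref{appendix-B} respectively.

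First I would treat the genuine commutator piece $[T_f,\Delta_k]\nabla g=\sum_{|j-k|\le N_0}[S_{j-1}f,\Delta_k]\Delta_j\nabla g$ by writing $\Delta_k h(x)=\int h_k(x-y)h(y)\,dy$ with $h_k(x)=2^k\check\varphi(2^kx)$, applying the first-order Taylor expansion
\[
S_{j-1}f(x)-S_{j-1}f(y)=(x-y)\cdot\!\int_0^1\!\nabla S_{j-1}f(y+\tau(x-y))\,d\tau,
\]
and observing $\int|z||h_k(z)|\,dz\lesssim 2^{-k}$. This yields the pointwise bound
\[
\bigl|[S_{j-1}f,\Delta_k]\Delta_j\nabla g(x)\bigr|\lesssim 2^{-k}\|\nabla f\|_\infty\,\mathcal M(\Delta_j\nabla g)(x)\lesssim 2^{j-k}\|\nabla f\|_\infty\,\mathcal M(\Delta_j g)(x).
\]
Multiplying by $2^{ks}$, summing over the finite range $|j-k|\le N_0$, taking $\ell^q$ in $k$, and applying the Fefferman-Stein vector-valued inequality (valid since $(p,q)\in(1,\infty)\times(1,\infty]$, or $p=q=\infty$) converts the maximal functions back to $\Delta_j g$ and produces the contribution $\|\nabla f\|_\infty\|g\|_{\F^s_{p,q}}$, common to both inequalities.

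For the remaining paraproduct and remainder pieces $T_{\Delta_k\nabla g}f-\Delta_k T_{\nabla g}f$ and $R(f,\Delta_k\nabla g)-\Delta_k R(f,\nabla g)$, the support properties of the Littlewood-Paley blocks force each of these to be a finite sum (over $|j-k|\lesssim 1$ after reindexing, or $j\ge k-N_0$ for the paraproduct with high-frequency $f$) of products of the form $S_{j-1}(\Delta_k\nabla g)\Delta_j f$, $\Delta_k(S_{j-1}\nabla g\,\Delta_j f)$, and corresponding remainder pairs. To obtain \eqref{appendix-A} I would bound each factor in $L^\infty$ by $\|\nabla g\|_\infty$ (using Bernstein where one frequency-localized factor needs to be placed in $L^\infty$) and pair the other factor with $\|f\|_{\F^s_{p,q}}$ via Fefferman-Stein. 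To obtain \eqref{appendix-B} I would instead integrate by parts to shift the derivative from $g$ onto $f$: for example, writing
\[
S_{j-1}(\Delta_k\nabla g)\Delta_j f=\nabla\bigl(S_{j-1}\Delta_k g\cdot\Delta_j f\bigr)-S_{j-1}\Delta_k g\cdot\Delta_j\nabla f,
\]
and similarly for the remainder terms, so that the factor $\|g\|_\infty$ carries the undifferentiated $g$ while the $\F^s_{p,q}$ norm is placed on $\nabla f$. The extra $\nabla$ outside does not hurt because after applying another $\Delta_{k'}$ from the $\ell^q$-sum it produces only an $O(2^{k'})$ factor absorbed by frequency localization, and the support restriction $|j-k|\lesssim 1$ limits the sum.

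The technical obstacle is the second estimate \eqref{appendix-B}: one must avoid any term where $\nabla$ ends up on $g$, because the hypothesis only controls $\|g\|_\infty$ and not $\|\nabla g\|_\infty$. This forces the integration-by-parts step to be carried out inside each paraproduct/remainder summand before using Bernstein's inequality, and requires a bookkeeping of where each $\nabla$ lands after the commutator $\Delta_k$ is expanded. A secondary subtlety is that $\F^s_{p,q}$ is the homogeneous space while $k$ ranges over all of $\mathbb Z$, so one has to use the maximal-function bound uniformly in $k$ rather than peeling off a low-frequency term, but this is already built into the Fefferman-Stein argument and causes no additional loss for $s>0$.
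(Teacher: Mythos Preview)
Your proposal is correct and follows essentially the same route as the paper: the identical Bony decomposition of $[f,\Delta_k]\nabla g$ into four pieces, mean-value/kernel estimates on the genuine commutator $[T_f,\Delta_k]\nabla g$, and the Fefferman--Stein vector-valued maximal inequality to pass from pointwise maximal-function bounds to the $L^p(\ell^q)$ norm. The only cosmetic difference is that the paper integrates by parts inside the kernel already for terms I and IV (so as to dispense with the $\div f=0$ hypothesis used in \cite{CMZ}) and thereby routes both of them to the $\|\nabla f\|_\infty\|g\|_{\F^s_{p,q}}$ contribution, whereas in your scheme the remainder piece for \eqref{appendix-A} lands on the $\|\nabla g\|_\infty\|f\|_{\F^s_{p,q}}$ side; for \eqref{appendix-B} the paper simply says the argument is analogous, and your integration-by-parts plan to shift $\nabla$ from $g$ onto $f$ is exactly what is needed to fill that in.
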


\begin{proof}  We use the Einstein convention on the summation over repeated indices $i \in [1,d]$. By the Bony's decomposition, one has
\begin{align*}
[f,\Delta_k]\cdot \nabla g &=[f_i,\Delta_k]\partial_i g=[T_{f_i},\Delta_k]\partial_i g + T'_{\Delta_k \partial_i g}f_i-\Delta_k(T_{\partial_i g}f_i)-\Delta_k(R(f_i,\partial_i g)) \\
&\triangleq \uppercase\expandafter{\romannumeral1}+\uppercase\expandafter{\romannumeral2}+\uppercase\expandafter{\romannumeral3}+\uppercase\expandafter{\romannumeral4},
\end{align*}
where $T'_u v$ stands for $T_u v+R(u,v)$. The proof is similar with \cite{CMZ}. We just estimate the terms $\uppercase\expandafter{\romannumeral1}$ and $\uppercase\expandafter{\romannumeral4}$ since the condition $\div f=0$ is used in \cite{CMZ}.
\begin{align}\label{A-1}
|\uppercase\expandafter{\romannumeral1}|&=\left| \sum_{k'\sim k}\int_{\R^d}(S_{k'-1}f_i(x)-S_{k'-1}f_i(y))2^{kd}h(2^k(x-y))\partial_i\Delta_{k'}g(y) \dd y\right|\nonumber \\
&\lesssim  \sum_{k'\sim k}\int_{\R^d}\big|(S_{k'-1}\partial_i f_i(x)-S_{k'-1}\partial_i f_i(y))2^{kd}h(2^k(x-y))\Delta_{k'}g(y)\big|\dd y \nonumber \\
&\quad +  \sum_{k'\sim k}\int_{\R^d} \big|(S_{k'-1} f_i(x)-S_{k'-1} f_i(y))2^{k(d+1)}(\partial_i h)(2^k(x-y))\Delta_{k'}g(y)\big| \dd y \nonumber \\
&\lesssim \|\nabla f\|_\infty \int_{\R^d} 2^{kd}|\nabla h(2^k(x-y))| |\Delta_{k'}g(y)|\dd y \nonumber \\
&\quad+ \|\nabla f\|_\infty \int_{\R^d}2^k |x-y|2^{kd}|\nabla h(2^k(x-y))| |\Delta_{k'}g(y)|\dd y \nonumber \\
&\lesssim \|\nabla f\|_\infty \mathbf{M}(|\Delta_{k'}g(\cdot)|)(x),
\end{align}
where $k'\sim k$ stands for $|k'-k|\leq 4$ and the maximal function $Mf(x)$ is defined by
$$\mathbf{M}f(x)=\sup_{r>0}\frac{1}{|\mathfrak{B}(x,r)|}\int_{\mathfrak{B}(x,r)}f(y)\dd y.$$
Multiplying $2^{ks}$ on both sides of \eqref{A-1}, taking $\ell^q(\mathbb{Z})$ norm then taking $L^p$ norm, we have
\begin{align*}
\big\| \|2^{ks}|\uppercase\expandafter{\romannumeral1}(x)|\|_{\ell^q(\mathbb{Z})} \big\|_{L^p} &\lesssim \|\nabla f\|_\infty \left\| \|\mathbf{M}(2^{k's}|\Delta_{k'}g(\cdot)|)(x)\|_{\ell^q(\mathbb{Z})} \right\|_{L^p} \nonumber\\
&\lesssim \|\nabla f\|_\infty \big\| \|2^{ks}|\Delta_{k}g(x)|\|_{\ell^q(\mathbb{Z})} \big\|_{L^p} \nonumber\\
&\lesssim \|\nabla f\|_\infty \|g\|_{\F^s_{p,q}}.
\end{align*}
For the term $\uppercase\expandafter{\romannumeral4}$, we have
\begin{align*}
|\uppercase\expandafter{\romannumeral4}|&=\left| \sum_{k'\geq k-3} \Delta_k(\Delta_{k'}f_i\partial_i\tilde{\Delta}_{k'}g)\right|\nonumber \\
&=\left| \sum_{k'\geq k-3}\int_{\R^d} 2^{kd}h(2^k(x-y)) \Delta_{k'}f_i(y)\partial_i\tilde{\Delta}_{k'}g(y)\dd y \right|\nonumber \\
&\lesssim  \sum_{k'\geq k-3}\int_{\R^d}\big|2^{kd}h(2^k(x-y)) \Delta_{k'}\partial_i f_i(y)\tilde{\Delta}_{k'}g(y)\big|\dd y \nonumber \\
&\quad +  \sum_{k'\geq k-3}\int_{\R^d} \big|2^{k(d+1)}(\partial_i h)(2^k(x-y)) \Delta_{k'}f_i(y)\partial_i\tilde{\Delta}_{k'}g(y)\big| \dd y  \nonumber \\
&\lesssim \|\nabla f\|_\infty \sum_{k'\geq k-3} \mathbf{M}(\tilde{\Delta}_{k'}g)(x) + \sum_{k'\geq k-3} 2^k \mathbf{M}(\tilde{\Delta}_{k'}g)(x) \|\Delta_{k'}f \|_\infty.
\end{align*}
Then for $s>0$, we obtain
\begin{align*}
\big\| \|2^{ks}|\uppercase\expandafter{\romannumeral4}(x)|\|_{\ell^q(\mathbb{Z})} \big\|_p &\lesssim \|\nabla f\|_\infty \left\| \left\| \sum_{k'\geq k-3} 2^{(k-k')s}\mathbf{M}(2^{k's}\tilde{\Delta}_{k'}g)(x) \right\|_{\ell^q(\mathbb{Z})} \right\|_{L^p} \nonumber\\
&\quad+ \|\nabla \Delta_{k'} f\|_\infty \big\| \| \sum_{k'\geq k-3} 2^{(k-k')(s+1)}\mathbf{M}(2^{k's}\tilde{\Delta}_{k'}g)(x) \|_{\ell^q(\mathbb{Z})} \big\|_{L^p} \nonumber\\
&\lesssim \|\nabla f\|_\infty \big\| \|\mathbf{M}(2^{k's}\tilde{\Delta}_{k'}g)(x)\|_{\ell^q(\mathbb{Z})} \big\|_{L^p} \nonumber\\
&\lesssim \|\nabla f\|_\infty \big\|\| 2^{ks}|\tilde{\Delta}_{k}g(x)| \|_{\ell^q(\mathbb{Z})}\big\|_{L^p} \nonumber\\
&\lesssim \|\nabla f\|_\infty \|g\|_{\F^s_{p,q}}.
\end{align*}
Therefore, we get the desired inequality \eqref{appendix-A}. The inequality \eqref{appendix-B} is similar, we omit here.
\end{proof}
\begin{proposition}\label{2-1}
Let $1< p,q<\infty$ and $s>0$.  Assume that $f_0\in F^s_{p,r}(\mathbb{R}^d)$, $g\in L^1([0,T]; F^s_{p,r}(\mathbb{R}^d))$. If $f\in L^\infty([0,T]; F^s_{p,r}(\mathbb{R}^d))\bigcap \mathcal{C}([0,T]; \mathcal{S}'(\mathbb{R}^d))$ solves the following d-D linear transport equation:
\begin{equation}\label{2.14}
\quad \partial_t f+v\cdot \nabla f=g,\quad  f|_{t=0} =f_0,
\end{equation}
then there exists a constant $C=C(d,p,r,s)$, such that
\begin{align}\label{trans1}
&\|f(t)\|_{\F^s_{p,r}(\mathbb{R}^d)}\leq e^{CV(t)}\|f_0\|_{\F^s_{p,r}(\mathbb{R}^d)}+\int_0^t\|g(\tau)\|_{\F^s_{p,r}(\mathbb{R}^d)}\mathrm{d}\tau \nonumber\\& \quad \quad
+C\int^t_0\left(\|f(\tau)\|_{\F^s_{p,r}(\mathbb{R}^d)}\|\nabla v(\tau)\|_{L^\infty(\mathbb{R}^d)}+\|\nabla v(\tau)\|_{\F^{s-1}_{p,r}(\mathbb{R}^d)}\|\nabla f(\tau)| |_{L^\infty(\mathbb{R}^d)}\right)\mathrm{d}\tau,
\end{align}
or
\begin{align}\label{trans2}
&\|f(t)\|_{\F^s_{p,r}(\mathbb{R}^d)}\leq e^{CV(t)}\|f_0\|_{\F^s_{p,r}(\mathbb{R}^d)}+\int_0^t\|g(\tau)\|_{\F^s_{p,r}(\mathbb{R}^d)}\mathrm{d}\tau \nonumber\\& \quad \quad
+C\int^t_0\left(\|f(\tau)\|_{\F^s_{p,r}(\mathbb{R}^d)}\|\nabla v(\tau)\|_{L^\infty(\mathbb{R}^d)}+\|\nabla v(\tau)\|_{\F^{s}_{p,r}(\mathbb{R}^d)}\|f(\tau)| |_{L^\infty(\mathbb{R}^d)}\right)\mathrm{d}\tau,
\end{align}
here $V(t)=\int^t_0\|\pa_xv\|_{L^\infty}\dd \tau$.

Moreover, if $s>1+\frac{d}{p}$, we have
\bbal
\|f(t)\|_{{F}^s_{p,r}}\leq \left(\|f_0\|_{{F}^s_{p,r}}+\int^t_0e^{-C\widetilde{V}(\tau)}\|g(\tau)\|_{F^s_{p,r}}\dd \tau\right)e^{C\widetilde{V}(t)},
\end{align*}
here $\widetilde{V}(t)=\int^t_0\|v\|_{F^s_{p,r}}\dd \tau$.
\end{proposition}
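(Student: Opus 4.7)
The strategy is the classical frequency localization plus characteristic method, adapted to $F^s_{p,r}$ by means of vector-valued maximal function estimates. Since $\|f\|_{F^s_{p,r}}\sim \|f\|_{L^p}+\|f\|_{\dot F^s_{p,r}}$ for $s>0$, I would handle the $L^p$ part by the standard $L^p$ transport estimate (multiply \eqref{2.14} by $|f|^{p-2}f$, integrate, and invoke Gronwall to get $\|f(t)\|_{L^p}\le e^{CV(t)}(\|f_0\|_{L^p}+\int_0^t\|g\|_{L^p}\,\dd\tau)$) and focus the effort on the homogeneous part.

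\textbf{Step 1 (Localization).} Apply the dyadic projector $\Delta_k$ to \eqref{2.14} to obtain the frequency-localized transport equation
\begin{equation*}
\partial_t (\Delta_k f)+v\cdot\nabla(\Delta_k f)=\Delta_k g - R_k,\qquad R_k:=[\Delta_k,v\cdot\nabla]f=[v,\Delta_k]\cdot\nabla f,
\end{equation*}
where the commutator $R_k$ is exactly the object controlled by Lemma \ref{com1}.

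\textbf{Step 2 (Flow and pointwise $\ell^r$ bound).} Let $X(t,x)$ denote the flow of $v$, $\partial_t X=v(t,X)$, $X(0,x)=x$; the Lipschitz assumption implicit in the estimate makes it well-defined, with Jacobian $|\det\nabla_x X(t,\cdot)^{\pm1}|\le e^{CV(t)}$. Integrating along characteristics gives
\begin{equation*}
(\Delta_k f)(t,X(t,x))=(\Delta_k f_0)(x)+\int_0^t\bigl[\Delta_k g-R_k\bigr](\tau,X(\tau,x))\,\dd\tau.
\end{equation*}
Multiplying by $2^{ks}$, taking the $\ell^r_k$ norm pointwise, and using Minkowski for the time integral yields
\begin{equation*}
\bigl\|\bigl(2^{ks}\Delta_k f(t,X(t,\cdot))\bigr)_k\bigr\|_{\ell^r}\le \bigl\|\bigl(2^{ks}\Delta_k f_0\bigr)_k\bigr\|_{\ell^r}+\int_0^t\bigl\|\bigl(2^{ks}(\Delta_k g-R_k)(\tau,X(\tau,\cdot))\bigr)_k\bigr\|_{\ell^r}\,\dd\tau.
\end{equation*}

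\textbf{Step 3 ($L^p$ norm, change of variables, commutator).} Taking the $L^p_x$ norm and changing variables $y=X(\tau,x)$ (each change contributes a factor $\le e^{CV(t)}$), I obtain
\begin{equation*}
\|f(t)\|_{\dot F^s_{p,r}}\le e^{CV(t)}\Bigl(\|f_0\|_{\dot F^s_{p,r}}+\int_0^t\|g(\tau)\|_{\dot F^s_{p,r}}\,\dd\tau+\int_0^t\bigl\|\|2^{ks}R_k\|_{\ell^r}\bigr\|_{L^p}\,\dd\tau\Bigr).
\end{equation*}
For \eqref{trans1}, I feed the commutator bound \eqref{appendix-A} of Lemma \ref{com1}; for \eqref{trans2}, I use the alternative bound \eqref{appendix-B} (with $f$ and $g$ in the roles reversed so that the second factor is $\|v\|_{\infty}\|\nabla f\|_{F^s_{p,r}}$-type), matching the structure of the two inequalities to be proved.

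\textbf{Step 4 (Gronwall and special case).} Combining with the $L^p$ part and linearizing $e^{CV(t)}\sim 1+CV(t)+\cdots$, I close the estimate via Gronwall's lemma. For the last inequality ($s>1+d/p$), the embedding $F^{s-1}_{p,r}\hookrightarrow L^\infty$ gives $\|\nabla v\|_{L^\infty}+\|\nabla f\|_{L^\infty}\lesssim\|v\|_{F^s_{p,r}}+\|f\|_{F^s_{p,r}}$, which absorbs the two mixed terms in \eqref{trans1}--\eqref{trans2} into a single $\widetilde V(t)=\int_0^t\|v\|_{F^s_{p,r}}\,\dd\tau$-driven Gronwall, yielding the final clean exponential form.

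The main obstacle I anticipate is the bookkeeping in Step 3: one must ensure that the vector-valued change of variables and the Minkowski-type exchange between $L^p(\ell^r)$ and the time integral are compatible, and then match the output of Lemma \ref{com1} (which bounds $\|\|2^{ks}R_k\|_{\ell^r}\|_{L^p}$) precisely against the two different mixed-norm structures appearing in \eqref{trans1} and \eqref{trans2}. The commutator decomposition for $R_k$ via Bony's paraproduct—already carried out in the proof of Lemma \ref{com1}—is the technical heart, and the remainder of the argument is Gronwall bookkeeping.
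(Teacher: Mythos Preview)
Your proposal is correct and follows essentially the same route as the paper: localize with $\Delta_k$, integrate along the flow $X(t,x)$ of $v$, take the pointwise $\ell^r$ norm and then $L^p$ with a Jacobian change of variables (producing the $e^{CV(t)}$ factors), and control the commutator $R_k=[v,\Delta_k]\cdot\nabla f$ via the two alternatives in Lemma~\ref{com1} to obtain \eqref{trans1} and \eqref{trans2} respectively. For the final $s>1+d/p$ estimate the paper multiplies the localized equation by $e^{-V(t)}$ before integrating (rather than linearizing the exponential as you suggest), but this is only a cosmetic difference in how Gronwall is applied; your embedding-plus-Gronwall closure is equivalent.
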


\begin{proof}  Let $X(t,x)$ solves the ordinary differential equation
$$\pa_tX(t,x)=v(t,X(t,x)),\quad X(0,x)=x.$$
Then
\bbal
\|\pa_xX(t,x)\|_{L^\infty}+\|(\pa_xX)^{-1}(t,x)\|_{L^\infty}\leq e^{V(t)}, \quad V(t)=\int^t_0\|\pa_xv\|_{L^\infty}\dd \tau.
\end{align*}
Applying $\Delta_j$ to \eqref{2.14}, we have
\bbal
\pa_t\Delta_jf+v\pa_x\Delta_jf=\Delta_jg+[v,\Delta_j]\pa_xf,
\end{align*}
which implies
\bal\label{trans}
\pa_t(\Delta_jf(t,X(t,x)))=\Delta_jg(t,X(t,x))+[v,\Delta_j]\pa_xf(t,X(t,x)).
\end{align}
Then, we obtain
\bbal
\left(\sum_{j}|2^{js}\Delta_jf(t,X(t,x))|^r\right)^{\frac1r}&\leq \left(\sum_{j}|2^{js}\Delta_jf_0(x)|^r\right)^{\frac1r}
+\int^t_0 \left(\sum_{j}|2^{js}\Delta_jg(\tau,X(t,x))|^r\right)^{\frac1r}\dd \tau \\&\quad+\int^t_0\left(\sum_{j}|2^{js}[v,\Delta_j]\pa_xf(\tau,X(t,x))|^r\right)^{\frac1r}\dd \tau,
\end{align*}
by Lemma\ref{com1}, we get \eqref{trans1} and \eqref{trans2}. Moreover, by \eqref{trans},
\bbal
\pa_t(e^{-V(t)}\Delta_jf(t,X(t,x)))&=-e^{-V(t)}\|\pa_xv(t)\|_{L^\infty}\Delta_jf(t,X(t,x)) \\ &\quad +e^{-V(t)}\Delta_jg(t,X(t,x))+e^{-V(t)}[v,\Delta_j]\pa_xf(t,X(t,x)).
\end{align*}
Then, we obtain
\bbal
&e^{-V(t)} \left(\sum_{j}|2^{js}\Delta_jf(t,X(t,x))|^r\right)^{\frac1r}\\
&\leq \left(\sum_{j}|2^{js}\Delta_jf_0(x)|^r\right)^{\frac1r}+\int^t_0 e^{-V(\tau)}\|\pa_xv(\tau)\|_{L^\infty}\left(\sum_{j}|2^{js}\Delta_jf(\tau,X(t,x))|^r\right)^{\frac1r}\dd \tau
\\&+\int^t_0 e^{-V(\tau)}\left(\sum_{j}|2^{js}\Delta_jg(\tau,X(t,x))|^r\right)^{\frac1r}\dd \tau +\int^t_0e^{-V(\tau)}\left(\sum_{j}|2^{js}[v,\Delta_j]\pa_xf(\tau,X(t,x))|^r\right)^{\frac1r}\dd \tau,
\end{align*}
which implies
\bbal
\|f(t)\|_{\dot{F}^s_{p,r}}&\leq e^{V(t)}\left(e^{CV(t)}\|f_0\|_{\dot{F}^s_{p,r}}+\int^t_0e^{-V(\tau)}\|g(\tau)\|_{\dot{F}^s_{p,r}}\dd \tau
 +C\int^t_0e^{-V(\tau)}\|v\|_{F^s_{p,r}}\|f\|_{F^s_{p,r}}\dd \tau\right).
\end{align*}
Also, we have
\bbal
\|f(t)\|_{L^p}\leq e^{V(t)}\left(e^{CV(t)}\|f_0\|_{L^p}+\int^t_0e^{-V(\tau)}\|\pa_xv(\tau)\|_{L^\infty}\|f(\tau)\|_{L^p}\dd \tau+\int^t_0e^{-V(\tau)}\|g(\tau)\|_{L^p}\dd \tau\right).
\end{align*}
Thus
\bbal
e^{-V(t)}\|f(t)\|_{{F}^s_{p,r}}&\leq e^{CV(t)}\|f_0\|_{{F}^s_{p,r}}+\int^t_0e^{-V(\tau)}\|g(\tau)\|_{F^s_{p,r}}\dd \tau
 +C\int^t_0e^{-V(\tau)}\|v\|_{F^s_{p,r}}\|f\|_{F^s_{p,r}}\dd \tau,
\end{align*}
which implies
\bbal
e^{-V(t)}\|f(t)\|_{{F}^s_{p,r}}\leq\left(e^{CV(t)}\|f_0\|_{{F}^s_{p,r}}++\int^t_0e^{-V(\tau)}\|g(\tau)\|_{F^s_{p,r}}\dd \tau\right)e^{C\int^t_0\|v\|_{F^s_{p,r}}\dd \tau},
\end{align*}
and then leads to
\bbal
\|f(t)\|_{{F}^s_{p,r}}\leq \left(\|f_0\|_{{F}^s_{p,r}}+\int^t_0e^{-C\widetilde{V}(\tau)}\|g(\tau)\|_{F^s_{p,r}}\dd \tau\right)e^{C\widetilde{V}(t)}.
\end{align*}
This completes the proof of Proposition \ref{2-1}.
\end{proof}
\section{Proof of Theorem \ref{the1}}
We divide the proof of Theorem \ref{the1} into several steps.
\subsection{Existence and Uniqueness}

\textbf{Step 1. Construction of the approximate system}

We define by induction a sequence $\{u^{n}\}_{n\in\mathbb{N}}$ of smooth functions
  by solving the following linear system:
    \begin{align}\label{lin}
       \begin{cases}
        \partial_tu^{n+1}+(u^n)^2\partial_xu^{n+1}=\mathbf{P}(u^n),\\
        u^{n+1}(0,x)=S_{n+1}u_0(x),
        \end{cases}
    \end{align}
for $n=0,1,2,\cdots$, where $u^{0}\triangleq0$.

Since $u_0\in F^{s}_{p,r}$, then all initial data $S_{n+1}u_0\in F^\infty_{p,r}$ and $\|S_{n+1}u_0\|_{F^{s}_{p,r}}\leq C\|u_0\|_{ F^{s}_{p,r}}$. By induction, for every $n\geq 1$,  there exist a  maximal time $T_n$ and a unique solution $u^{n}$ to \eqref{lin} in  $\mathcal{C}^1([0,T_n);F^\infty_{p,r})$. Obviously, $u^{n}$  belongs to $E^{s}_{p,r}(T)$ for all positive $T$.

\textbf{Step 2. Uniform estimates to the approximate solutions}

Since $\pa_x\Lambda^{-2}$ is $S^{-1}$-multiplier, we have the following inequality
   \begin{align}\label{l}
   \|u^{n+1}(t)\|_{F^{s}_{p,r}}&\leq e^{CU^{n}(t)}\left(\|S_{n+1}u_0\|_{F^{s}_{p,r}}+C\int^t_0 e^{-CU^{n}(\tau)}
   \|\mathbf{P_1}(u^{n}),\mathbf{P_2}(u^{n})\|_{F^{s}_{p,r}} \mathrm{d}\tau\right)\nonumber\\
   &\leq Ce^{CU^{n}(t)} \|u_0\|_{F^{s}_{p,r}}+C\int^t_0 e^{CU^{n}(t)-CU^{n}(\tau)}
   \|u^{n}(\tau)\|^3_{F^{s}_{p,r}}\mathrm{d}\tau,
   \end{align}
   where  $U^{n}(t)\triangleq \int_0^t \|u^{n}(\tau)\|^2_{F^{s}_{p,r}}\mathrm{d}\tau$ and $C\geq1$.

We now fix a $T>0$ such that $4C^3\|u_0\|_{F^{s}_{p,r}}T<1$. By induction, we gain
\begin{align*}\label{3.2}
\forall t\in[0,T],~~~\|u^n(t)\|_{F^{s}_{p,r}}
\leq \frac{C\|u_0\|_{F^{s}_{p,r}}}{\sqrt{1-4C^3\|u_0\|^2_{F^{s}_{p,r}}t}},~~~~~~\forall n\in \mathbb{N}.
\end{align*}
In fact, this is obvious for $n=0$ since $u^0 = 0$. Suppose it is valid for $n\geq1$, we shall prove it holds for $n+1$.
Direct computations gives that
\begin{align*}
CU^{n}(t)-CU^{n}(t')&=C\int^{t}_{t'}\|u^{n}(\tau)\|^2_{F^{s}_{p,r}}\mathrm{d}\tau\\
&\leq \int^{t}_{t'}\frac{C^3\|u_0\|^2_{F^{s}_{p,r}}}{1-4C^3\|u_0\|^2_{F^{s}_{p,r}}\tau}\mathrm{d}\tau\\&
=\fr14\ln\left(\frac{1-4C^3\|u_0\|^2_{F^{s}_{p,r}}t'}{1-4C^3\|u_0\|^2_{F^{s}_{p,r}}t}\right),
\end{align*}
then from \eqref{l}, we deduce that
\begin{align*}
\|u^{n+1}(t)\|_{F^{s}_{p,r}}\nonumber &\leq C\left(1-4C^3\|u_0\|^2_{F^{s}_{p,r}}t\right)^{-\fr14}\|u_0\|_{_{F^s_{p,r}}}\left(1
+\int^t_0\left(1-4C^3\|u_0\|^2_{F^{s}_{p,r}}t'\right)^{-\fr54}C^3\|u_0\|^{2}_{F^s_{p,r}}\dd t'\right)
\\\nonumber&= \frac{C\|u_0\|_{F^{s}_{p,r}}}{\sqrt{1-4C^3\|u_0\|^2_{F^{s}_{p,r}}t}}\leq \frac{C\|u_0\|_{F^{s}_{p,r}}}{\sqrt{1-4C^3\|u_0\|^2_{F^{s}_{p,r}}T}}.
\end{align*}
Therefore, $\{u^{n}\}_{n\in\mathbb{N}}$ is bounded in $L^\infty([0,T];F^{s}_{p,r})$. This entails that $(u^{n})^2\partial_xu^{n+1}$ is bounded  in  $L^\infty([0,T];F^{s-1}_{p,r}).$  As the right-hand side of \eqref{l} is bounded in $L^\infty([0,T];F^{s}_{p,r}),$  we can conclude that $\{u^{n}\}_{n\in\mathbb{N}}$ is bounded in $E^{s}_{p,r}(T).$

\textbf{Step 3. Convergence of the approximate solutions}

   We will show that $\{u^{n}\}_{n\in\mathbb{N}}$ is a Cauchy sequence in $\mathcal{C}([0,T];B^{s-1}_{p,\infty})$. Indeed, for all $n\in\mathbb{N},$  we have
 \begin{align*}
    \begin{cases}
    \left(\partial_t+(u^{n+m})^2\partial_x\right)(u^{n+m+1}-u^{n+1})=-(u^{m+n}-u^n)(u^{m+n}+u^n)\partial_xu^{n+1} +\mathbf{P}(u^{n+m})-\mathbf{P}(u^n),\\
    (u^{n+m+1}-u^{n+1})|_{t=0}=(S_{n+m+1}-S_{n+1})u_0(x).
    \end{cases}
 \end{align*}
By the definition of $S_n$, we have
\begin{equation*}
   (S_{n+m+1}-S_{n+1})u_0(x)=\sum\limits_{q=n+1}^{n+m} \Delta_q u_0(x),
\end{equation*}
which leads to
\begin{align*}
\left\|\sum\limits_{q=n+1}^{n+m} \Delta_q u_0\right\|_{B^{s-1}_{p,\infty}}&=\sup_{j\geq-1}2^{j(s-1)}\left\|\Delta_j\sum\limits_{q=n+1}^{n+m} \Delta_q u_0\right\|_{L^p}\\
&\leq C\sum\limits_{q=n+1}^{n+m} 2^{-q}2^{qs}\left\|\Delta_q u_0\right\|_{L^p}\leq C2^{-n}\|u_0\|_{B^{s}_{p,\infty}}\leq C2^{-n}\|u_0\|_{F^{s}_{p,r}}.
\end{align*}
Applying the regularity theory of transport equation, we obtain for $t\in[0,T]$
\begin{align*}
  \|(u^{n+m+1}-u^{n+1})(t)\|_{B^{s-1}_{p,\infty}}
  & \leq C
  e^{CU^{n+m}(t)}\Big(\|(S_{n+m+1}-S_{n+1})u_0\|_{B^{s-1}_{p,\infty}}\\
  &\quad+\int^t_0e^{-CU^{n+m}(\tau)}\|(u^{n+m}-u^{n})(\tau)\|_{B^{s-1}_{p,\infty}}
  \|(u^n,u^{n+1},u^{n+m},u^{n+m+1})(\tau)\|^2_{B^s_{p,\infty}}\dd \tau\Big).
\end{align*}
Define
\begin{equation*}
   b_n^m(t)\triangleq \|(u^{n+m}-u^n)(t)\|_{B^{s-1}_{p,\infty}}.
 \end{equation*}
So we can find a positive $C_T$ independent of $n,m$ such that
\begin{align*}
\begin{cases}
b^m_{n+1}(t)\leq C_{T}\left(2^{-n}+\int_0^t b^m_n(\tau)\mathrm{d}\tau\right),\quad \forall \, t\in[0,T],\\
b^m_1(t)\leq C_T(1+t),\quad \forall \, t\in[0,T].
\end{cases}
\end{align*}
Arguing by induction with respect to the index $n$, we deduce that
\begin{align*}
b^m_{n+1}(t)
&\leq\left(C_{T}\sum\limits_{k=0}^n \frac{(2T C_T)^k}{k!}\right)2^{-n}+ C_T\frac{(T C_T)^{n+1}}{(n+1)!}\rightarrow 0 ,\quad as\ n\rightarrow \infty.
\end{align*}
So we conclude that $\{u^{n}\}_{n\in\mathbb{N}}$ is a Cauchy sequence in $\mathcal{C}([0,T];B^{s-1}_{p,\infty})$ and converges to a limit function $u\in \mathcal{C}([0,T];B^{s-1}_{p,\infty})$.

\textbf{Step 4. Local existence of a solution}

We will show that $u$ belongs to $E^{s}_{p,r}(T)$ and satisfies Eqs.\eqref{5}-\eqref{6}. Since  $\{u^{n}\}_{n\in \mathbb{N}}$ is bounded in $L^\infty(0,T;F^{s}_{p,r})$, Fatou's lemma guarantees that $u$ also belongs to $L^\infty(0,T;F^{s}_{p,r})$. Now, as $\{u^{n}\}_{n\in\mathbb{N}}$
converges to $u \in \ \mathcal{C}([0,T];B^{s-1}_{p,\infty})\hookrightarrow\ \mathcal{C}([0,T];F^{s-1-\eta}_{p,r})$ with $\eta\in(0,s-1)$, an interpolation argument ensures that the convergence actually holds true in $  \mathcal{C}([0,T];F^{s'}_{p,r})$ for any $s' < s.$  It is then easy to pass to the limit in \eqref{lin} and to conclude that
$u$ is a solution to \eqref{lin}.
Furthermore, the solution $u$ is in $\mathcal{C}([0,T];F^{s}_{p,r})$ if $r<\infty$. In fact, for any $t_1,t_2\in [0,T]$, we have
\bbal
\|\mathbf{S}_{t_2}(u_0)-\mathbf{S}_{t_1}(u_0)\|_{F^s_{p,r}}
&\leq \|\mathbf{S}_{t_1}(S_Nu_0)-\mathbf{S}_{t_1}(u_0)\|_{F^s_{p,r}}+\|\mathbf{S}_{t_1}(S_Nu_0)-\mathbf{S}_{t_2}(S_Nu_0)\|_{F^s_{p,r}}\\
&\quad+\|\mathbf{S}_{t_2}(S_Nu_0)-\mathbf{S}_{t_2}(u_0)\|_{F^s_{p,r}}\\
&\leq \sum_{i=1}^2\|\mathbf{S}_{t_i}(S_Nu_0)-\mathbf{S}_{t_i}(u_0)\|_{F^s_{p,r}}+\int_{t_1}^{t_2}\|\pa_{\tau}\mathbf{S}_{\tau}(S_Nu_0)\|_{F^s_{p,r}}\dd\tau\\
&\leq C\|S_Nu_0-u_0\|_{F^s_{p,r}}+C2^N|t_1-t_2|.
\end{align*}
Combining  the density of $S_Nu_0$ in $F^s_{p,r}$ with $r<\infty$  yields that $u\in\mathcal{C}([0,T];F^{s}_{p,r})$.

Finally, because $u$ belongs to $ L^\infty(0,T;F^{s}_{p,r})$, then the right-hand side of the following equation
$$ \pa_tu +u\pa_xu=\mathbf{P}(u)\in L^\infty(0,T;F^{s}_{p,r}).$$
Also, we see that $\partial_tu$
is in $\mathcal{C}([0,T];F^{s-1}_{p,r})$ if $r<\infty$. Hence we conclude that the solution $ u\in \mathcal{C}([0,T];F^{s}_{p,r})\cap \mathcal{C}^1([0,T];F^{s-1}_{p,r}).$

\textbf{Step 5. Uniqueness of a solution}

In order to prove the uniqueness, we consider two solutions $u$ and $v$ of Eqs.\eqref{5}-\eqref{6} in
$E^{s}_{p,r}(T)$ with the same initial data. Using the following Lemma, we must have $u\equiv v$ in $E^{s}_{p,r}(T)$. In fact,
$$\|u-v\|_{B^{s-1}_{p,\infty}}\leq C\|u_1(0)-u_2(0)\|_{B^{s-1}_{p,\infty}}=0,$$
which implies the uniqueness.

\begin{lemma}\label{ley1} Let $u,v\in \mathcal{C}([0,T],F^{s}_{p,r})$ be two solutions of Eqs.\eqref{5}-\eqref{6} associated with $u_0$ and $v_0$, respectively. Then we have the estimate for the difference $w=u-v$
\begin{align}
&\|w\|_{B^{s-1}_{p,\infty}}\leq C\|w_0\|_{B^{s-1}_{p,\infty}},\label{y1}
\\&\|w\|_{F^{s}_{p,r}}\leq C\left(\|w_0\|_{F^{s}_{p,r}}+\int^t_0\|\pa_{x}v\|_{F^{s}_{p,r}}\|w\|_{L^\infty}\dd \tau\right),\label{y2}
\end{align}
where $w_0=u_0-v_0$, the constants $C$ depends on $T$ and initial norm $\|u_0,v_0\|_{F^{s}_{p,r}}$.
\end{lemma}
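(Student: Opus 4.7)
The plan is to derive a linear transport equation for $w = u - v$, apply the appropriate transport estimate in each target space, and close via Gronwall. Subtracting the equations satisfied by $u$ and $v$ and using the algebraic identity $u^2 u_x - v^2 v_x = u^2 w_x + (u+v) v_x w$, we see that
\begin{equation*}
w_t + u^2 w_x = -(u+v) v_x w + \bigl[\mathbf{P}(u) - \mathbf{P}(v)\bigr], \qquad w|_{t=0} = w_0.
\end{equation*}
Each $\mathbf{P}$-difference factors through $w$ cleanly: for instance $u_x^3 - v_x^3 = w_x(u_x^2 + u_x v_x + v_x^2)$, $u^3 - v^3 = w(u^2 + uv + v^2)$, and $u u_x^2 - v v_x^2 = w u_x^2 + v w_x (u_x + v_x)$. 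Set $\Theta(t) := \|u(t)\|_{F^s_{p,r}} + \|v(t)\|_{F^s_{p,r}}$, which is bounded on $[0,T]$ by assumption.

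For \eqref{y1}, I apply the standard Besov transport estimate in $B^{s-1}_{p,\infty}$ (the Besov analogue of Proposition \ref{2-1}). Since $s - 1 > 1/p$, the space $B^{s-1}_{p,\infty}$ is a Banach algebra, which handles $(u+v) v_x w$ directly by $C\Theta^2 \|w\|_{B^{s-1}_{p,\infty}}$. For the nonlocal $\mathbf{P}$-differences, I exploit the two-derivative gain of $\Lambda^{-2}$ (and the one-derivative gain of $\partial_x \Lambda^{-2}$) to recast each term as a product estimate in $B^{s-2}_{p,\infty}$; Lemma \ref{le1} then supplies the bound $\|fg\|_{B^{s-2}_{p,\infty}} \lesssim \|f\|_{B^{s-2}_{p,\infty}} \|g\|_{B^{s-1}_{p,\infty}}$, which I apply by placing one $w$-factor in $B^{s-2}_{p,\infty}$ (via $\|w\|_{B^{s-2}_{p,\infty}} \leq \|w\|_{B^{s-1}_{p,\infty}}$) and the remaining $u,v$-factors in $B^{s-1}_{p,\infty}$ (guaranteed by the embedding $F^s_{p,r} \hookrightarrow B^{s-1}_{p,\infty}$). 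Each contribution is of the form $C\Theta^2 \|w\|_{B^{s-1}_{p,\infty}}$, and Gronwall closes \eqref{y1}.

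For \eqref{y2}, I apply Proposition \ref{2-1} (form \eqref{trans1}) to $w$ with drift $u^2$. The commutator contributions reduce to $C\Theta^2 \|w\|_{F^s_{p,r}}$ by Moser-type bounds for $\|\partial_x(u^2)\|_{F^{s-1}_{p,r}}$ and are absorbed by Gronwall. To control the source $F = -(u+v) v_x w + \mathbf{P}(u) - \mathbf{P}(v)$ in $F^s_{p,r}$, I apply the Chae-type Moser inequality (Lemma \ref{mos}) repeatedly, splitting each product so that the $w$-factor appears either paired with an $L^\infty$-bounded coefficient (yielding $\Theta^2 \|w\|_{F^s_{p,r}}$, absorbable) or as $\|w\|_{L^\infty}$ paired with $\|\partial_x v\|_{F^s_{p,r}}$ (the surviving term). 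Concretely, writing $(u+v) v_x w = v_x \cdot [(u+v)w]$ and applying Lemma \ref{mos} gives exactly the schematic bound $C\Theta^2 \|w\|_{F^s_{p,r}} + C\Theta \|\partial_x v\|_{F^s_{p,r}} \|w\|_{L^\infty}$; analogous splittings work for the $\mathbf{P}$-differences. Gronwall then yields \eqref{y2}.

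The main obstacle, as flagged in Remark \ref{re1}, is the $B^{s-1}_{p,\infty}$ bound: the parallel argument within the Triebel--Lizorkin scale would require a product law in $F^{s-2}_{p,r}$, which is not available. Stepping into the Besov scale and invoking Lemma \ref{le1}, combined with the two-derivative smoothing of $\Lambda^{-2}$, is precisely what makes \eqref{y1} close, and this is why uniqueness here cannot be carried out purely within the Triebel--Lizorkin framework.
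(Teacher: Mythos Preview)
Your proposal is correct and follows essentially the same route as the paper's proof. The paper simply writes the transport equation for $w$, invokes Proposition~\ref{2-1} together with Lemma~\ref{mos} for \eqref{y2}, and omits the argument for \eqref{y1}; you supply precisely the details that are implicit there, including the use of Lemma~\ref{le1} in $B^{s-2}_{p,\infty}$ for the nonlocal terms, which is exactly the mechanism highlighted in Remark~\ref{re1}.
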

\begin{proof}\;
Obviously, $w\in \mathcal{C}([0,T],F^{s}_{p,r})$ and $w$ solves the transport equation
\begin{align}\label{m}
\begin{cases}
\pa_tw+u^2w_x=-w(u+v)v_x+[\mathbf{P}(u)-\mathbf{P}(v)], \\
w(0,x)=u_0(x)-v_0(x).
\end{cases}
\end{align}
Applying Proposition \ref{2-1} to \eqref{m} and using Lemma \ref{mos} yields \eqref{y2}. It is easy to obtain \eqref{y1} and we omit the proof.
\end{proof}
\subsection{Continuous Dependence}
Letting $u=\mathbf{S}_{t}(u_0)$ and $v=\mathbf{S}_{t}(S_Nu_0)$, using Lemma \ref{ley1}, we have
\bbal
&\|\mathbf{S}_{t}(S_Nu_0)\|_{F^{s+1}_{p,r}}\leq C\|S_Nu_0\|_{F^{s+1}_{p,r}}\leq C2^N\|u_0\|_{F^{s}_{p,r}}
\end{align*}
and
\bbal
\|\mathbf{S}_{t}(S_Nu_0)-\mathbf{S}_{t}(u_0)\|_{L^\infty}&\leq C\|\mathbf{S}_{t}(S_Nu_0)-\mathbf{S}_{t}(u_0)\|_{B^{s-1}_{p,\infty}}
\\&\leq C\|S_Nu_0-u_0\|_{B^{s-1}_{p,\infty}}\\
&\leq C\|S_Nu_0-u_0\|_{F^{s-1}_{p,r}}\\
&\leq C2^{-N}\|S_{N}u_0-u_0\|_{F^{s}_{p,r}},
\end{align*}
which implies
\bal\label{zm}
\|\mathbf{S}_{t}(S_Nu_0)-\mathbf{S}_{t}(u_0)\|_{F^s_{p,r}}&\leq C\|S_Nu_0-u_0\|_{F^s_{p,r}}.
\end{align}
Then, using the triangle inequality, we have for $u_0,\widetilde{u}_0\in F^s_{p,r}$,
\bbal
\|\mathbf{S}_{t}(u_0)-\mathbf{S}_{t}(\widetilde{u}_0)\|_{F^s_{p,r}}
&\leq \|\mathbf{S}_{t}(S_Nu_0)-\mathbf{S}_{t}(u_0)\|_{F^s_{p,r}}+\|\mathbf{S}_{t}(S_N\widetilde{u}_0)-\mathbf{S}_{t}(\widetilde{u}_0)\|_{F^s_{p,r}}\\
&~~+\|\mathbf{S}_{t}(S_Nu_0)-\mathbf{S}_{t}(S_N\widetilde{u}_0)\|_{F^s_{p,r}}
\\&\leq C\|S_Nu_0-u_0\|_{F^s_{p,r}}+C\|S_N\widetilde{u}_0-\widetilde{u}_0\|_{F^s_{p,r}}+C\|\mathbf{S}_{t}(S_Nu_0)-\mathbf{S}_{t}(S_N\widetilde{u}_0)\|_{F^s_{p,r}}
\\
&:=\mathbf{I}_1+\mathbf{I}_2+\mathbf{I}_3.
\end{align*}
By the interpolation inequality, one has
\bbal
\mathbf{I}_3&\leq C\|\mathbf{S}_{t}(S_Nu_0)-\mathbf{S}_{t}(S_N\widetilde{u}_0)\|_{B^s_{p,1}}\\&\leq C\|\mathbf{S}_{t}(S_Nu_0)-\mathbf{S}_{t}(S_N\widetilde{u}_0)\|^{\fr12}_{B^{s-1}_{p,\infty}}
\|\mathbf{S}_{t}(S_Nu_0)-\mathbf{S}_{t}(S_N\widetilde{u}_0)\|^{\fr12}_{B^{s+1}_{p,\infty}}\\
&\leq C\|S_Nu_0-S_N\widetilde{u}_0\|^{\fr12}_{B^{s-1}_{p,\infty}}
\|\mathbf{S}_{t}(S_Nu_0)-\mathbf{S}_{t}(S_N\widetilde{u}_0)\|^{\fr12}_{B^{s+1}_{p,\infty}}\leq C2^{\frac N2}\|u_0-\widetilde{u}_0\|^{\frac12}_{F^{s-1}_{p,r}},
\end{align*}
which clearly implies
\bbal
\|\mathbf{S}_{t}(u_0)-\mathbf{S}_{t}
(\widetilde{u}_0)\|_{F^s_{p,r}}
&\lesssim \|S_Nu_0-u_0\|_{F^s_{p,r}}+
\|S_N\widetilde{u}_0-\widetilde{u}_0\|_{F^s_{p,r}}+2^{\frac N2}\|u_0-\widetilde{u}_0\|^{\frac12}_{F^{s-1}_{p,r}}.
\end{align*}
Due to $1<p,r<\infty$, $\forall \varepsilon>0$, one can select $N$ to be sufficiently large, such that
$$C(\|S_Nu_0-u_0\|_{F^s_{p,r}}+
\|S_N\widetilde{u}_0-\widetilde{u}_0\|_{F^s_{p,r}})\leq \frac{\varepsilon}{2}.$$
Then fix $N$, choose $\delta$ so small that $\|u_0-\widetilde{u}_0\|^{\frac12}_{F^{s-1}_{p,r}}<\delta$ and $C2^{\frac N2}\delta^{\fr12}<\frac{\varepsilon}{2}$. Hence
\bbal
\|\mathbf{S}_{t}(u_0)-\mathbf{S}_{t}
(\widetilde{u}_0)\|_{F^s_{p,r}}<\varepsilon.
\end{align*}
This completes the proof of continuous dependence.

\subsection{Non-uniform Continuous Dependence}
{\bf Step1: Error estimates between the data-to-solution map and initial data}
\begin{proposition}\label{pro 3.1}
Assume that $s>\max\{1+\frac{1}{p}, \frac{3}{2}\}$ with $1< p, r< \infty$ and $\|u_0\|_{F^s_{p,r}}\lesssim 1$. Then under the assumptions of Theorem \ref{the1}, we have
\bbal
\|\mathbf{S}_{t}(u_0)-u_0+t\mathbf{v}_0\|_{F^{s}_{p,r}}\lesssim t^{2}\left[1+\|u_0\|^2_{F^{s-1}_{p,r}}\|u_0\|_{F^{s+1}_{p,r}}
+\|u_0\|^3_{F^{s-1}_{p,r}}\Big(\|u_0\|_{F^{s+1}_{p,r}}+\|u_0\|_{F^{s-1}_{p,r}}\|u_0\|_{F^{s+2}_{p,r}}\Big)\right],
\end{align*}
where we denote $\mathbf{v}_0=u_0^2\pa_xu_0-\mathbf{P_1}(u_0)-\mathbf{P_2}(u_0)$.
\end{proposition}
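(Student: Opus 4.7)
The plan is to Taylor expand $t\mapsto u(t)=\mathbf{S}_t(u_0)$ around $t=0$ to second order. Since the equation in \eqref{5} gives $\partial_\tau u\bigr|_{\tau=0}=-u_0^2\partial_xu_0+\mathbf{P}(u_0)=-\mathbf{v}_0$, two applications of the fundamental theorem of calculus yield
$$
u(t)-u_0+t\mathbf{v}_0 \;=\; \int_0^t\bigl(\partial_\tau u(\tau)+\mathbf{v}_0\bigr)d\tau \;=\; \int_0^t\int_0^\tau \partial_{\tau'}^{\,2} u(\tau')\,d\tau'\,d\tau,
$$
so it suffices to bound $\|\partial_t^2 u(\tau)\|_{F^s_{p,r}}$ uniformly on a short interval $[0,T]$ with the right structural form; the double time integral will then produce the desired $t^2$.

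First, I would differentiate \eqref{5} in time and substitute $u_t=-u^2u_x+\mathbf{P}(u)$ into
$$
\partial_t^2 u \;=\; -2u\,u_t\,u_x \;-\; u^2\partial_x u_t \;+\; \partial_t\mathbf{P}(u),
$$
so that $\partial_t^2 u$ becomes a polynomial expression in $u$ and its spatial derivatives up to order two, with $\Lambda^{-2}$ and $\partial_x\Lambda^{-2}$ smoothing the cubic corrections coming from $\mathbf{P}$. Expanding $u^2\partial_xu_t = -u^2\partial_x(u^2u_x)+u^2\partial_x\mathbf{P}(u) = -2u^3u_x^2-u^4u_{xx}+u^2\partial_x\mathbf{P}(u)$ identifies $u^4u_{xx}$ as the unique top-derivative term (order $s+2$), while everything else requires at most $s+1$ spatial derivatives.

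Next, I would invoke persistence of regularity for \eqref{5}--\eqref{6} in $F^{s+k}_{p,r}$ for $k\in\{0,1,2\}$: this follows from Proposition \ref{2-1} via a standard Gronwall argument on the approximate sequence of Step 2 of the existence proof, and gives
$$
\|u(\tau)\|_{F^{s+k}_{p,r}}\lesssim \|u_0\|_{F^{s+k}_{p,r}},\qquad k=0,1,2,\ \tau\in[0,T].
$$
Then, to estimate each term of $\partial_t^2u$ in $F^s_{p,r}$, I would apply the Moser-type bilinear inequality
$$
\|fg\|_{F^s_{p,r}}\lesssim \|f\|_{L^\infty}\|g\|_{F^s_{p,r}}+\|g\|_{L^\infty}\|f\|_{F^s_{p,r}},\qquad s>0,
$$
(an inhomogeneous analogue of Lemma \ref{mos} via $\|f\|_{F^s_{p,r}}\sim\|f\|_{L^p}+\|f\|_{\dot F^s_{p,r}}$), combined with the embedding $F^{s-1}_{p,r}\hookrightarrow L^\infty$ which holds since $s-1>\max\{1/2,1/p\}$. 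The strategy is to place every extra copy of $u$ in the $L^\infty$ slot (bounded by $\|u_0\|_{F^{s-1}_{p,r}}$), leaving exactly one factor in a high-order norm. Under the normalisation $\|u_0\|_{F^s_{p,r}}\lesssim1$, this produces exactly
$$
\|u^4u_{xx}\|_{F^s_{p,r}}\lesssim \|u_0\|_{F^{s-1}_{p,r}}^4\|u_0\|_{F^{s+2}_{p,r}},\quad \|u^3u_x^2\|_{F^s_{p,r}}\lesssim \|u_0\|_{F^{s-1}_{p,r}}^3\|u_0\|_{F^{s+1}_{p,r}},
$$
together with the quadratic contribution $\|u\,u_t\,u_x\|_{F^s_{p,r}}\lesssim \|u_0\|_{F^{s-1}_{p,r}}^2\|u_0\|_{F^{s+1}_{p,r}}$ coming from the $-2uu_tu_x$ piece after a single use of $u_t=-u^2u_x+\mathbf{P}(u)$, and an additive "$1$" absorbing terms in which enough $F^{s-1}$ factors are available to be bounded by the normalising constant.

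The main obstacle will be the bookkeeping for $\partial_t\mathbf{P}(u)=-\tfrac32\Lambda^{-2}(u_x^2u_{tx})-\partial_x\Lambda^{-2}\bigl(\tfrac32 u_tu_x^2+3uu_xu_{tx}+3u^2u_t\bigr)$: after substituting $u_t$, each of its pieces is cubic or quartic in $u$ and would a priori contain a third spatial derivative through $u_{tx}=\partial_x(-u^2u_x+\mathbf{P}(u))$, but the two-derivative smoothing of $\Lambda^{-2}$ (one derivative for $\partial_x\Lambda^{-2}$) is precisely what is needed to bring every term down to order $\leq s+2$ and fit it inside the Moser-type template above. A secondary technical point is to justify the pointwise-in-time bound on $\partial_t^2 u$ at the level of the limit solution rather than the approximants, which follows from the continuity in $F^{s+2}_{p,r}$ inherited from Step 4 of the existence proof applied at the higher regularity level.
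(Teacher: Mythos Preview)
Your route (second-order Taylor expansion and a uniform bound on $\|\partial_t^2 u\|_{F^s_{p,r}}$) is a legitimate alternative to the paper's. The paper instead applies the fundamental theorem of calculus \emph{once}, writing
\[
u(t)-u_0+t\mathbf{v}_0=\int_0^t\bigl(\mathbf{P}(u)-\mathbf{P}(u_0)-u^2u_x+u_0^2\partial_xu_0\bigr)\,d\tau,
\]
and then controls the integrand through three preliminary difference bounds: $\|u(\tau)-u_0\|_{F^s_{p,r}}\lesssim\tau(1+\|u_0\|^2_{F^{s-1}}\|u_0\|_{F^{s+1}})$, $\|u(\tau)-u_0\|_{B^{s-1}_{p,\infty}}\lesssim\tau\|u_0\|_{F^{s-1}_{p,r}}$, and $\|u(\tau)-u_0\|_{F^{s+1}_{p,r}}\lesssim\tau(\cdots)$. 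The factor of $\tau$ in these differences produces the second power of $t$ after integration. The Burgers difference is handled via $u^2u_x-u_0^2\partial_xu_0=\tfrac13\partial_x(u^3-u_0^3)$ and a Moser estimate in $F^{s+1}_{p,r}$.

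There is, however, a genuine gap in your bookkeeping. After substituting $u_t=-u^2u_x+\mathbf{P}(u)$, the piece $-2uu_tu_x$ contains $uu_x\mathbf{P}(u)$. With only the Moser inequality and the embedding $F^{s-1}_{p,r}\hookrightarrow L^\infty$ you cite, the triple-product estimate forces a term
\[
\|u\|_{L^\infty}\|u_x\|_{F^s_{p,r}}\|\mathbf{P}(u)\|_{L^\infty}\lesssim \|u_0\|_{F^{s-1}_{p,r}}\,\|u_0\|_{F^{s+1}_{p,r}}\cdot\|\mathbf{P}(u)\|_{L^\infty},
\]
and from Moser alone one only gets $\|\mathbf{P}(u)\|_{L^\infty}\lesssim\|\mathbf{P}(u)\|_{F^s_{p,r}}\lesssim1$. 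This leaves $\|u_0\|_{F^{s-1}_{p,r}}\|u_0\|_{F^{s+1}_{p,r}}$ with only \emph{one} power of the $F^{s-1}$ norm, which is \emph{not} dominated by the stated right-hand side (and, crucially, blows up like $2^{n/2}$ on the data $u_0^n=f_n+g_n$ used later). To recover the second $F^{s-1}$ factor you need $\|\mathbf{P}(u)\|_{L^\infty}\lesssim\|u_0\|_{F^{s-1}_{p,r}}$, and that requires exactly the low-regularity product estimate of Lemma~\ref{le1} in $B^{s-2}_{p,\infty}$ (cf.\ Remark~\ref{re1}); this is the same device the paper exploits in its $B^{s-1}_{p,\infty}$ difference estimate. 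Once you import that ingredient, your $\partial_t^2u$ approach goes through and matches the paper's bound.
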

\begin{proof}\; For simplicity, we denote $u(t)=\mathbf{S}_t(u_0)$. Firstly, by the local well-posedness result, there exists a positive time $T=T(\|u_0\|_{F^s_{p,r}},s,p,r)$ such that the solution $u(t)$ belongs to $\mathcal{C}([0, T];  B_{p, r}^s)$. Moreover,  for all $t\in[0,T]$ and $\gamma\geq s$,  there holds
\bal\label{u-estimate}
\|u(t)\|_{B^{s-1}_{p,\infty}}\leq C\|u_0\|_{B^{s-1}_{p,\infty}},\quad \|u(t)\|_{F^\gamma_{p,r}}\leq C\|u_0\|_{F^\gamma_{p,r}}.
\end{align}

Now we shall estimate the different Triebel-Lizorkin and Besov norms of the term $u(t)-u_0$.
By the Mean Value Theorem, we obtain
\bal\label{zyl1}
\|u(t)-u_0\|_{F^s_{p,r}}
&\leq \int^t_0\|\pa_\tau u\|_{F^s_{p,r}} \dd\tau\nonumber\\
&\leq \int^t_0\|\mathbf{P_1}(u)\|_{F^s_{p,r}} \dd\tau+\int^t_0\|\mathbf{P_2}(u)\|_{F^s_{p,r}} \dd\tau+ \int^t_0\|u^2 \pa_xu\|_{F^s_{p,r}} \dd\tau\nonumber\\
&\lesssim t\left(\|u\|^3_{L_t^\infty(F^{s}_{p,r})}
+\|u\|^2_{L_t^\infty(L^\infty)}\|u_x\|_{L_t^\infty(F^{s}_{p,r})}+\|u_x\|_{L_t^\infty(L^\infty)}\|u\|^2_{L_t^\infty(F^{s}_{p,r})}\right)\nonumber\\
&\lesssim t\left(\|u_0\|^3_{F^{s}_{p,r}}+\|u_0\|^2_{B^{s-1}_{p,\infty}}\|u_0\|_{F^{s+1}_{p,r}}\right)\nonumber\\
&\lesssim t\left(1+\|u_0\|^2_{F^{s-1}_{p,r}}\|u_0\|_{F^{s+1}_{p,r}}\right),
\end{align}
where we have used that $F_{p, r}^{s-1}$ is a Banach algebra with $s-1>\max\{\frac{1}{p}, \frac{1}{2}\}$ in the third inequality.

Following the same procedure of estimates as above, then by Lemma \ref{le1}, we have
\bal\label{zyl2}
\|u(t)-u_0\|_{B^{s-1}_{p,\infty}}
&\leq \int^t_0\|\pa_\tau u\|_{B^{s-1}_{p,\infty}} \dd\tau
\nonumber\\&\les \int^t_0\|\mathbf{P_1}(u)\|_{B^{s-1}_{p,\infty}} \dd\tau+\int^t_0\|\mathbf{P_2}(u)\|_{B^{s-1}_{p,\infty}} \dd\tau+ \int^t_0\|u^2 \pa_xu\|_{B^{s-1}_{p,\infty}} \dd\tau\nonumber\\&\leq \int^t_0\|(u_x)^3\|_{B^{s-2}_{p,\infty}} \dd\tau+\int^t_0\|u(u_x)^2,u^3\|_{B^{s-2}_{p,\infty}} \dd\tau+ \int^t_0\|u^2\|_{B^{s-1}_{p,\infty}}\|u\|_{B^{s}_{p,\infty}} \dd\tau
\nonumber\\&\les \int^t_0\|u_x\|_{B^{s-2}_{p,\infty}}\|(u_x)^2,uu_x\|_{B^{s-1}_{p,\infty}} \dd\tau+ \int^t_0\|u^2\|_{B^{s-1}_{p,\infty}} \|u\|_{B^{s}_{p,\infty}} \dd\tau\lesssim t\|u_0\|_{F^{s-1}_{p,r}}
\end{align}
and
\bal\label{zyl3}
\|u(t)-u_0\|_{F^{s+1}_{p,r}}
&\leq \int^t_0\|\pa_\tau u\|_{F^{s+1}_{p,r}} \dd\tau
\nonumber\\&\leq \int^t_0\|\mathbf{P_1}(u)\|_{F^{s+1}_{p,r}} \dd\tau+\int^t_0\|\mathbf{P_2}(u)\|_{F^{s+1}_{p,r}} \dd\tau+ \int^t_0\|u^2 \pa_xu\|_{F^{s+1}_{p,r}} \dd\tau\nonumber\\
&\lesssim t\left(\|u\|^3_{L_t^\infty(F^{s}_{p,r})}
+\|u\|_{L_t^\infty(L^\infty)}\|u_x^2\|_{L_t^\infty(F^{s}_{p,r})}+\|u_x\|^2_{L_t^\infty(L^\infty)}\|u\|_{L_t^\infty(F^{s}_{p,r})}\right)\nonumber\\&
\quad+ t\left(\|u^2\|_{L_t^\infty(F^{s+1}_{p,r})}\|u_x\|_{L_t^\infty(L^\infty)}
+\|u\|^2_{L_t^\infty(L^\infty)}\|u\|_{L_t^\infty(F^{s+2}_{p,r})}\right)
\nonumber\\&\lesssim t\left(1+\|u_0\|_{F^{s-1}_{p,r}}\|u_0\|_{F^{s+1}_{p,r}}+\|u_0\|^2_{F^{s-1}_{p,r}}\|u_0\|_{F^{s+2}_{p,r}}\right),
\end{align}

Next, we estimate the Triebel-Lizorkin norm for the term $u(t)-u_0+t\mathbf{v}_0$. By the Mean Value Theorem again, from \eqref{zyl1}-\eqref{zyl3} we obtain
\bbal
\|u(t)-u_0+t\mathbf{v}_0\|_{F^s_{p,r}}
&\leq \int^t_0\|\partial_\tau u+\mathbf{v}_0\|_{F^s_{p,r}} \dd\tau \\
&\leq \int^t_0\|\mathbf{P}_1(u)-\mathbf{P}_1(u_0)\|_{F^s_{p,r}} \dd\tau+\int^t_0\|\mathbf{P}_2(u)-\mathbf{P}_2(u_0)\|_{F^s_{p,r}} \dd\tau \\
&\quad+\int^t_0\|u^2\partial_xu-u^2_0\partial_xu_0\|_{F^s_{p,r}}\dd\tau\\
&\lesssim
t^{2}\left[1+\|u_0\|^2_{F^{s-1}_{p,r}}\|u_0\|_{F^{s+1}_{p,r}}
+\|u_0\|^3_{F^{s-1}_{p,r}}\Big(\|u_0\|_{F^{s+1}_{p,r}}+\|u_0\|_{F^{s-1}_{p,r}}\|u_0\|_{F^{s+2}_{p,r}}\Big)\right],
\end{align*}
where we have used
\begin{align*}
\|\mathbf{P}_1(u)-\mathbf{P}_1(u_0)\|_{F^s_{p,r}}+\|\mathbf{P}_2(u)-\mathbf{P}_2(u_0)\|_{F^s_{p,r}}&\les\|u-u_0\|_{F^s_{p,r}}
\end{align*}
and
\begin{align*}
\|u^2u_x-u^2_0\partial_xu_0\|_{F_{p, r}^s}&\les\|u^3-u^3_0\|_{F_{p, r}^{s+1}}\les\|(u-u_0)(u^2+uu_0+u^2_0)\|_{F_{p, r}^{s+1}}\\
&\les\|u-u_0\|_{B_{p, \infty}^{s-1}}\|u_0\|_{F_{p, r}^{s-1}}\|u_0\|_{F_{p, r}^{s+1}}+\|u-u_0\|_{F_{p, r}^{s+1}}\|u_0\|^2_{F_{p, r}^{s-1}}.
   \end{align*}
Thus, we complete the proof of Proposition \ref{pro 3.1}.
\end{proof}

{\bf Step2: Construction of Initial Data}

Firstly, we need to introduce smooth, radial cut-off functions to localize the frequency region. Precisely,
let $\widehat{\phi}\in \mathcal{C}^\infty_0(\mathbb{R})$ be an even, real-valued and non-negative function on $\R$ and satisfy
\begin{numcases}{\widehat{\phi}(\xi)=}
1,&if $|\xi|\leq \frac{1}{4}$,\nonumber\\
0,&if $|\xi|\geq \frac{1}{2}$.\nonumber
\end{numcases}
\begin{lemma}\label{ley2} Let $s\in\R$. Define the high-low frequency functions $f_n$ and $g_n$ by
\bbal
&f_n=2^{-ns}\phi\left(x\right)\sin \left(\frac{17}{12}2^nx\right),
\\&g_n=2^{-\frac{n}{2}}\phi\left(x\right),\quad n\gg1.
\end{align*}
Then for any $\sigma\in\R$, we have
\bbal
&\|f_n\|_{F^\sigma_{p,r}}\leq C2^{(\sigma-s)n}\|\phi\|_{L^p}\quad\text{and}\quad \|g_n\|_{F^\sigma_{p,r}}\leq C2^{-\frac{n}{2}}\|\phi\|_{L^p},\\
&\liminf_{n\rightarrow \infty}\|g_n^2\pa_xf_n\|_{F^{s}_{p,\infty}}\geq M.
\end{align*}
\end{lemma}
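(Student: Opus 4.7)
The plan is to exploit the sharp Fourier localization of $f_n$ and $g_n$ so that all three claims reduce to elementary $L^p$ computations, after which the only delicate point is a mild oscillation-averaging argument. The crucial numerical observation is that $\tfrac{17}{12}$ lies strictly between $\tfrac{4}{3}$ and $\tfrac{3}{2}$, on which $\varphi(2^{-n}\cdot)\equiv 1$. Since $\widehat{\phi}$ is supported in $[-\tfrac12,\tfrac12]$, the Fourier support of $f_n$ is contained in a neighborhood of $\pm\tfrac{17}{12}2^n$ of half-width $\leq\tfrac12$; for all $n$ sufficiently large this sits inside the shell $\{4/3\cdot 2^n\leq|\xi|\leq 3/2\cdot 2^n\}$, so $\Delta_n f_n=f_n$, $S_0 f_n=0$, and $\Delta_j f_n=0$ for $j\neq n$. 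Analogously, $\widehat{g_n}\subset[-\tfrac12,\tfrac12]$ lies in $\{\vartheta\equiv 1\}$, hence $S_0 g_n=g_n$ and $\Delta_j g_n=0$ for every $j\geq 0$.

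The first two inequalities of the lemma are then immediate from the definition of $\|\cdot\|_{F^\sigma_{p,r}}$: in each case the Triebel--Lizorkin norm collapses to one nonzero dyadic block, giving
\[
\|f_n\|_{F^\sigma_{p,r}}=2^{n\sigma}\|f_n\|_{L^p}\leq 2^{(\sigma-s)n}\|\phi\|_{L^p},\qquad \|g_n\|_{F^\sigma_{p,r}}=\|g_n\|_{L^p}=2^{-n/2}\|\phi\|_{L^p},
\]
with the bound for $g_n$ independent of $\sigma$.

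For the lower bound I would first expand
\[
g_n^2\,\partial_x f_n \;=\; 2^{-n(1+s)}\,\phi^2\phi'\sin\!\Big(\tfrac{17}{12}2^nx\Big)\;+\;\tfrac{17}{12}\,2^{-ns}\,\phi^3\cos\!\Big(\tfrac{17}{12}2^nx\Big).
\]
Since $\widehat{\phi^2\phi'}$ and $\widehat{\phi^3}$, as threefold convolutions of $\widehat{\phi}$, are supported in $[-\tfrac32,\tfrac32]$, both summands have Fourier support in a neighborhood of $\pm\tfrac{17}{12}2^n$ of half-width $\leq 3/2$, which for $n$ large still lies in the shell where $\varphi(2^{-n}\cdot)\equiv 1$. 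Thus $\Delta_n$ is again the identity on $g_n^2\partial_x f_n$ and all other blocks vanish, so
\[
\|g_n^2\partial_x f_n\|_{F^s_{p,\infty}}=2^{ns}\|g_n^2\partial_x f_n\|_{L^p}=\Big\|\,2^{-n}\phi^2\phi'\sin(\tfrac{17}{12}2^nx)+\tfrac{17}{12}\,\phi^3\cos(\tfrac{17}{12}2^nx)\,\Big\|_{L^p}.
\]
The first summand has $L^p$ norm of order $2^{-n}$, so the $\liminf$ is governed by the cosine piece. For this I would invoke the standard weak-$\ast$ averaging for periodic functions: $|\cos(\lambda\cdot)|^p$ converges weakly-$\ast$ in $L^\infty$ to its period-average $c_p=\tfrac{1}{\pi}\int_0^\pi|\cos t|^p\,dt>0$, whence
\[
\|\phi^3\cos(\tfrac{17}{12}2^nx)\|_{L^p}^p\;\longrightarrow\;c_p\,\|\phi^3\|_{L^p}^p\;>\;0 \qquad (n\to\infty).
\]
Taking $M:=\tfrac{17}{24}\,c_p^{1/p}\|\phi^3\|_{L^p}$ (or any smaller positive constant) then yields the claimed bound.

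The only genuinely delicate step is the oscillation lower bound at the end; once the Fourier supports have been identified, the remainder is mechanical, and in particular no commutator or paradifferential estimates are needed.
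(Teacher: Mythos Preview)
Your argument is correct and is precisely the standard approach the paper has in mind: the authors do not give a proof but merely write ``The proof is similar to that in \cite{L20,Lyz}. We omit the details,'' and those references carry out exactly the Fourier-localization argument you describe (in the Besov setting). Your verification that the relevant supports fall in the region $\{4/3\le |2^{-n}\xi|\le 3/2\}$ where $\varphi(2^{-n}\cdot)\equiv1$, together with the Riemann--Lebesgue-type averaging for $|\cos|^p$, covers all the omitted details cleanly.
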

\begin{proof}
The proof is similar to that in \cite{L20,Lyz}. We omit the details.
\end{proof}
{\bf Step3: Difference between the data-to-solution map $\mathbf{S}_t(f_n+g_n)$ and $\mathbf{S}_t(f_n)$}

Set $u^n_0=f_n+g_n$. Obviously, we have
\bbal
\|u^n_0-f_n\|_{F^s_{p,r}}=\|g_n\|_{F^s_{p,r}}\leq C2^{-\frac{n}{2}},
\end{align*}
which means that
\bbal
\lim_{n\to\infty}\|u^n_0-f_n\|_{F^s_{p,r}}=0.
\end{align*}
It is easy to show that
\bbal
&\|f_n\|_{F^{s-1}_{p,r}}\lesssim 2^{-n},\quad\|u^n_0\|_{F^{s-1}_{p,r}}\lesssim 2^{-\frac{n}{2}},\\
&\|u^n_0,f_n\|_{F^{\sigma}_{p,r}}\leq C2^{(\sigma-s)n} \qquad  \mathrm{for} \quad \sigma\geq s,
\end{align*}
which imply
\bbal
&\|u_0^n\|^2_{F^{s-1}_{p,r}}\|u_0^n\|_{F^{s+1}_{p,r}}
+\|u_0^n\|^3_{F^{s-1}_{p,r}}\Big(\|u_0^n\|_{F^{s+1}_{p,r}}+\|u_0^n\|_{F^{s-1}_{p,r}}\|u_0^n\|_{F^{s+2}_{p,r}}\Big)\lesssim  1,
\\&\|f_n\|^2_{F^{s-1}_{p,r}}\|f_n\|_{F^{s+1}_{p,r}}
+\|f_n\|^3_{F^{s-1}_{p,r}}\Big(\|f_n\|_{F^{s+1}_{p,r}}+\|f_n\|_{F^{s-1}_{p,r}}\|f_n\|_{F^{s+2}_{p,r}}\Big)\lesssim  1.
\end{align*}
Notice that
\bbal
&\mathbf{S}_{t}(\underbrace{f_n+g_n}_{=~u^n_0})=\underbrace{\mathbf{S}_{t}(u^n_0)-u^n_0+t[(u ^n_{0})^2\pa_xu^n_{0}-\mathbf{P_1}(u^n_0)-\mathbf{P_2}(u^n_0)]}_{=~\mathbf{I}_1(u^n_0)}+f_n+g_n-t[(u ^n_{0})^2\pa_xu^n_{0}-\mathbf{P_1}(u^n_0)-\mathbf{P_2}(u^n_0)],\nonumber\\
&\mathbf{S}_{t}(f_n)=\underbrace{\mathbf{S}_{t}(f_n)-f_n+t[(f_n)^2\pa_xf_n-t\mathbf{P_1}(f_n)-t\mathbf{P_2}(f_n)]}_{=~\mathbf{I}_2(f_n)}+f_n-t[(f_n)^2\pa_xf_n-\mathbf{P_1}(f_n)-\mathbf{P_2}(f_n)],
\end{align*}
 by Proposition \ref{pro 3.1}, we deduce that
\bal\label{yyh}
\quad \ \|\mathbf{S}_{t}(u^n_0)-\mathbf{S}_{t}(f_n)\|_{F^s_{p,r}}
\geq&~\left\|\mathbf{I}_1(u^n_0)-\mathbf{I}_2(f_n)+g_n-t[(u ^n_{0})^2\pa_xu^n_{0}-(f_n)^2\pa_xf_n]\right\|_{F^s_{p,r}}\nonumber\\
&-t\left\|\mathbf{P_1}(u^n_0)-\mathbf{P_1}(f_n)\right\|_{F^s_{p,r}}-t\left\|\mathbf{P_2}(u^n_0)-\mathbf{P_2}(f_n)\right\|_{F^s_{p,r}}\nonumber\\
\geq&~ t\left\|(u ^n_{0})^2\pa_xu^n_{0}-(f_n)^2\pa_xf_n\right\|_{F^s_{p,\infty}}-C2^{-n/2}-Ct^{2}.
\end{align}
Due to $$
(u^n_{0})^2\pa_xu^n_{0}-(f_n)^2\pa_xf_n=(g_n)^2\pa_xf_n+2f_{n}g_{n}\pa_xf_{n}+(u^n_0)^2\pa_xg_n,
$$
after simple calculation, we obtain
\bbal
\big|\big|2f_{n}g_{n}\pa_xf_{n}+(u^n_0)^2\pa_xg_n\big|\big|_{F^s_{p,r}}&\leq  C2^{-n/2}
\end{align*}
and
$$
\left\|\mathbf{P_1}(u^n_0)-\mathbf{P_1}(f_n)\right\|_{F^s_{p,r}} \leq C\|u^n_0-f_n\|_{F^s_{p,r}} (\|u^n_0\|^2_{F^s_{p,r}}+\|f_n\|^2_{F^s_{p,r}})\leq  C2^{-n/2},
$$
$$
\left\|\mathbf{P_2}(u^n_0)-\mathbf{P_2}(f_n)\right\|_{F^s_{p,r}} \leq C\|u^n_0-f_n\|_{F^s_{p,r}} (\|u^n_0\|^2_{F^s_{p,r}}+\|f_n\|^2_{F^s_{p,r}})\leq  C2^{-n/2}.
$$
Using the fact
\begin{eqnarray*}
      \liminf_{n\rightarrow \infty} \|g^2_n\pa_xf_n\|_{F^s_{p,\infty}}\geq M,
        \end{eqnarray*}
from \eqref{yyh} we obtain
\bbal
\liminf_{n\rightarrow \infty}\|\mathbf{S}_t(f_n+g_n)-\mathbf{S}_t(f_n)\|_{F^s_{p,r}}\gtrsim t\quad\text{for} \ t \ \text{small enough}.
\end{align*}
This completes the proof of Theorem \ref{the1}.

\section*{Acknowledgements}
J. Li is supported by the National Natural Science Foundation of China (11801090 and 12161004) and Jiangxi Provincial Natural Science Foundation (20212BAB211004 and 20224BAB201008). Y. Yu is supported by the National Natural Science Foundation of China (12101011). W. Zhu is supported by the National Natural Science Foundation of China (12201118) and Guangdong
Basic and Applied Basic Research Foundation (2021A1515111018).

\section*{Declarations}
\noindent\textbf{Data Availability} No data was used for the research described in the article.

\vspace*{1em}
\noindent\textbf{Conflict of interest}
The authors declare that they have no conflict of interest.

\end{document}